\documentclass[preprint,3p]{elsarticle}  
\usepackage{xcolor}
\usepackage[T1]{fontenc}
\usepackage[utf8]{inputenc}
\usepackage{amsmath,amsfonts,amsthm,bm}
\DeclareMathOperator{\sgn}{sgn}
\usepackage{amssymb}
\usepackage{caption} 
\usepackage{bigints}
 \usepackage[english]{babel} 
 \usepackage{float} 
 \usepackage{graphics} 
 \usepackage{graphicx} 
\usepackage[utf8]{inputenc} 
\usepackage{mathtools}
\usepackage{capt-of} 
\usepackage{calrsfs}
\usepackage{lipsum}  
 \usepackage{epsfig}
\setlength{\parindent}{0in}
\setlength{\parskip}{12pt}
\newcommand{\R}{\mathbf{R}}

\newtheorem{theorem}{Theorem}[section]
\newtheorem{exam}{Example}[section]
\newtheorem{lemma}{Lemma}[section]
\newtheorem{cor}{Corollary}[section]
\newtheorem{definition}{Definition}[section]
\newtheorem{prop}{Proposition}[section]

\newdefinition{remark}{Remark}[section]


\DeclareMathAlphabet{\pazocal}{OMS}{zplm}{m}{n}


\numberwithin{equation}{section}
\begin{document}%

\begin{frontmatter}
\title{On the basic theory of some generalized and fractional derivatives} 
\author{Leila Gholizadeh Zivlaei\footnote[1]{leilagh@math.carleton.ca} and Angelo B. Mingarelli\footnote[2]{angelo@math.carleton.ca}}
\address{School of Mathematics and Statistics, Carleton University, Ottawa, Canada}

\begin{abstract}
We continue the development of the basic theory of generalized derivatives as introduced in \cite{JPA} and give some of their applications. In particular, we formulate versions of a weak maximum principle, Rolle's theorem, the Mean value theorem, the Fundamental theorem of Calculus, Integration by parts, along with an existence and uniqueness theorem for a generalized Riccati equation, each of which includes, as corollaries, the corresponding version for conformable fractional derivatives considered by  \cite{kat}, \cite{kha} among many others. Finally, we show that for each $\alpha > 1$ there is a fractional derivative and a corresponding function whose fractional derivative fails to exist everywhere on the real line.  
\end{abstract}

\begin{keyword}
Fractional differential equation\sep p-derivative\sep
\MSC[2010] Primary 34B24, 34C10; Secondary 47B50
\end{keyword}
\end{frontmatter}

\section{Introduction}
In a previous paper \cite{JPA} generalized (fractional) derivatives of the form 
\begin{equation}\label{eqq01}
D_p f(t) = \lim_{h\to 0} \frac{f(p(t,h))-f(t)}{h}
\end{equation}
were introduced whenever the limit exists and is finite. Here $p$ is a real valued function of the two real variables $(t, h)$ where $t$ is defined on some interval $I\subseteq \R$. Recently, two special cases of fractional derivatives of order $\alpha$ (with $0 < \alpha <1$) were defined by means of the limit definition,
\begin{equation}\label{eqq02}
D^{\alpha} f(t) = \lim_{h\to 0} \frac{f(p(t,h,\alpha))-f(t)}{h}
\end{equation}
 where $p(t,h,\alpha) = t+ht^{1-\alpha}$ see Katugampola \cite{kat}, and where $p(t,h,\alpha) = t\exp(ht^{-\alpha})$, see Khalil \cite{kha}. Under mild conditions on $p$ it can be shown (see \cite{JPA}) that such $p$-derivatives satisfy all the basic rules of differentiation (Product rule, etc.) and that indeed, the derivatives defined in \cite{kat} and \cite{kha} are special cases of these where, however, $p_h(t,0)\neq 0$ ($p_h$ is the partial derivative of $p$ with respect to its second variable).
 
We know (see  \cite{JPA}) that whenever $f$ is a differentiable function (in the ordinary sense, i.e., when $p(t,h) = t+h$) and $p_h(t,0) \neq 0$, then its generalized derivative \eqref{eqq01} exists at $t$ and 
\begin{equation}\label{eqq03}
D_p f(t) = p_h(t,0) \, f^\prime(t). 
\end{equation}
where the prime is the ordinary derivative of $f$ at $t$. This shows that the existence of a derivative implies the existence of a $p$-derivative so long as $p_h(t,0) \neq 0$. Indeed, \eqref{eqq03} shows that generalized or even fractional derivatives defined by either \eqref{eqq01} and \eqref{eqq02} are simply multiplication operators on the space of derivatives of differentiable functions, or just {\it weighted} first order differential operators where the weight, represented by $p_h(t,0)$, may itself be sign-indefinite on $I$.

Physically, one may think of these generalized derivatives as momentum operators with the $p$-term serving as a mass of arbitrary sign. Applications of the foregoing theory can be found in classical mechanics, see \cite{JPA}, where it is shown, in part, that in celestial mechanics elliptical orbits prevail independently of the fractional derivative chosen. The  resulting theory can then be included within the framework of weighted operator theory of ordinary differential operators, not without its own difficulties especially when the indicated partial derivative changes sign on intervals or, more generally, on sets of positive measure.

By way of examples, there has been a flurry of activity of late in the area of locally defined fractional differential operators and corresponding equations. Among these we cite chronologically, \cite{kha, kat, as}. We cannot begin to cite all the references related to these as, for example, Google Scholar refers to more than two-thousand references to paper \cite{kha} alone! Still, the articles referred to here will suffice for our purpose. In the sequel, unless otherwise specified, $\alpha, \beta$ will denote real parameters with $0 < \alpha<1$, $\beta \neq 0$, and $\beta \not\in \mathbf{Z}^-$.

Khalil, {\it et al.} \cite{kha}  define a function $f$ to be $\bm{\alpha}$${\bf -differentiable}$ at $t>0$ if the limit, 
\begin{equation}
\label{eq001}
T_0^\alpha f(t) = \lim_{ h\to 0} \frac{f(t+h t^{1-\alpha})-f(t)}{h}.
\end{equation}
exists and is finite. Katugampola \cite{kat} presented another (locally defined) derivative by requiring that, for $t>0$, 
\begin{equation}
\label{eq002}
T_0^\alpha f(t) = \lim_{ h\to 0} \frac{f(te^{h t^{-\alpha}})-f(t)}{h}.
\end{equation}
exist and be finite.  Recently, other authors, e.g., \cite{as}, considered minor variations in the definition \eqref{eq001} by assuming that, for $t>0$, 
\begin{equation}
\label{eq003}
D^{GFD} f(t) = \lim_{ h\to 0} \frac{f(t+\frac{\Gamma(\beta)\,  h t^{1-\alpha}}{\Gamma(\beta-\alpha+1)})-f(t)}{h}.
\end{equation}
exist and be finite. Note that in each of the three definitions, the case $\alpha=1$ (and $\beta=1$ in \eqref{eq003}) leads to the usual definition of a derivative (see Section~\ref{s2}  in the case of \eqref{eq002}).

Observe that, in each case, \eqref{eq001}, \eqref{eq002}, \eqref{eq003}, the derivatives are defined for $t>0$. As many authors have noticed one merely needs to replace $t^{1-\alpha}$ by  $(t-a)^{1-\alpha}$ in each of these definitions to allow for a derivative to be defined on an a given interval $(a, b)$, whether finite or infinite. In the case of \eqref{eq002} the point $t=0$ must be excluded as it is not in the domain of definition of the exponential.

More recently, G\'uzman {\it et al.}, \cite{guz}, considered a fractional derivative called the {\it N-derivative} defined for $t>0$ by
\begin{equation}
\label{eq002x}
N_F^\alpha f(t) = \lim_{ h\to 0} \frac{f(t  + h e^{ t^{-\alpha}})-f(t)}{h}.
\end{equation}
The function $f$ is said to be $N$-differentiable if \eqref{eq002x} exists and is finite. In a later paper \cite{viv}, the authors stated slight extensions of the previous definition to another $N_F^{\alpha}(t)$ derivative defined by the existence and finiteness of the limit,
\begin{equation}
\label{eq002y}
N_F^\alpha f(t) = \lim_{ h\to 0} \frac{f(t  + hF(t, \alpha))-f(t)}{h}.
\end{equation}
Here $F$ seems to have been left generally unspecified, yet the authors obtain the relation (see \cite{viv}, Equation (2))
\begin{equation}\label{eq002z} N_F^{\alpha}f(t) = F(t,\alpha)\, f^\prime(t),\end{equation}
also a consequence of Theorem~\ref{th2.5} below and the results in \cite{JPA} but whose validity is sensitive to the behaviour of the function $F$.

By way of another example, we show that the definition of the generalized derivative,
\begin{equation}
\label{eq009}
D^\alpha f(t) = \lim_{ h\to 0} \,  \frac{f(t+\sin(h) \, (\cos t)^{1-\alpha}) - f(t)}{h},
\end{equation}
where $0 < \alpha <1$ and the limit exists and is finite for $t\in [0,b]$, $b<\pi/2$, can also be handled by our methods and will lead to results that are a consequence of the definition in \cite{JPA}. Observe, in passing, that the case where $\alpha=1$ in \eqref{eq009}, i.e., $D^\alpha f(t) =f^\prime(t)$ whenever $f$ is differentiable in the ordinary sense.

These definitions of a generalized derivative function (sometimes called a fractional derivative) on the real line will be referred to occasionally as {\it locally defined} derivatives as, in each case, knowledge of $f$ is required merely in a neighborhood of the point $t$ under consideration. In contrast, in the case of the more traditional Riemann-Liouville or Caputo fractional derivatives, knowledge of the function $f$ is required on a much larger interval including the point $t$, see e.g., \cite{baj}. The previous, though very popular, derivatives defined in terms of singular integral operators shall not be considered here.

In this paper we continue the study of the generalized derivatives introduced in \cite{JPA} and show that a large number of  definitions such as \eqref{eq001}, \eqref{eq002}, \eqref{eq003}, \eqref{eq002x}, \eqref{eq002y} and \eqref{eq009} can be included in the more general framework of \cite{JPA}. Thus, the results obtained in papers using either of the fabove definitions are actually a corollary of our results. We also find versions of Rolle's Theorem, the Mean Value Theorem, and the Fundamental Theorem of Calculus, a result on the nowhere fractional differentiability of a class of fractional derivatives and corresponding functions, as well as an existence and uniqueness theorem for a generalized Riccati differential equation, all of which generalize many results in the literature.


\section{Preliminary observations}\label{s2}

The authors in each of \cite{kha, kat, as} point out that the notion of differentiability represented by either one of the definitions \eqref{eq001}, \eqref{eq002}, \eqref{eq003} considered, is more general than the usual one by showing that there are examples whereby $\alpha-$differentiability does not imply differentiability in the usual sense, although the converse holds. However, note that for $t>0$ the change of variable $s= t+h t^{1-\alpha}$ shows that
$$ \lim_{ h\to 0} \frac{f(t+h t^{1-\alpha})-f(t)}{h} = \frac{1}{t^{\alpha-1}}\lim_{s\to t} \frac{f(s)-f(t)}{s-t}.$$
Consequently, the left hand limit exists if and only if the right hand limit exists. Thus, for $t>0$, $p$-differentiability is equivalent to ordinary differentiability and the only possible exception is at $t=0$ (which is excluded anyway, by definition). A similar argument applies in the case of \eqref{eq003}.

Insofar as \eqref{eq002} is concerned, observe that solving for $h$ after the change of variable $s=te^{h t^{-\alpha}}$ is performed, and $t>0$, leads one to
$$\lim_{ h\to 0} \frac{f(te^{h t^{-\alpha}})-f(t)}{h} = \frac{1}{t^{\alpha}}\left( \lim_{s\to t} \frac{f(s)-f(t)}{s-t}\right ) \left( \lim_{s\to t} \frac{s-t}{\ln s-\ln t}\right ) = \frac{1}{t^{\alpha-1}}\lim_{s\to t} \frac{f(s)-f(t)}{s-t}.$$ 
It follows that both definitions, \eqref{eq001} and \eqref{eq002}, coincide for $t>0$ (see also \cite{JPA}). Definition \eqref{eq003} is simply a re-scaling of \eqref{eq001} by a constant  as can be verified by replacing $h$ in \eqref{eq001} by $\eta = h\, c$ where $c = \Gamma(\beta) / \Gamma(\beta-\alpha+1).$ Thus, strictly speaking, although it appears to be more general than \eqref{eq001}, it isn't really so. 

Next, observe that, for $t>0$, the $N$-derivative defined by \eqref{eq002x}, 
$$\lim_{ h\to 0} \frac{f(t + he^{t^{-\alpha}})-f(t)}{h} = \frac{1}{e^{-t^{\alpha}}}\left( \lim_{s\to t} \frac{f(s)-f(t)}{s-t}\right ) = e^{t^\alpha}\, f^\prime(t)$$ 
if either limit exists. Thus, for $t>0$, $f$ is $N$-differentiable if and only if $f$ is differentiable in the ordinary sense. This means that in [\cite{guz}, Theorem 2.3 (f)], $N$-differentiability is equivalent to ordinary differentiability (see the Remark in \cite{guz}, p.91).

Finally, for definition \eqref{eq009}, note that, for $t\in [0,b]$, $b < \pi/2$,
$$\lim_{ h\to 0} \,  \frac{f(t+\sin(h) \, \cos(t)^{1-\alpha}) - f(t)}{h} =\lim_{ s\to t} \,  \frac{f(s)-f(t)}{s-t}\lim_{ s\to t} \,  \frac{s-t}{\arcsin((s-t)\sec^{1-\alpha}(t)}=\cos^{1-\alpha}(t)\, f^\prime(t),$$
if either limit on the left or right exists.

Definition \eqref{eq009} gives something new that hasn't been studied before but falls within the framework of the theory developed in \cite{JPA}. These observations lead to the following theorem (see also Theorem 2.4 in \cite{JPA}).

Finally, for the generalized derivative defined by \eqref{eq002y}, it can be shown that this and the previous results are consequences of Proposition~\ref{th2.5} below.

\begin{theorem}\label{th1}
For $t>0$ (resp. $t\geq 0$) a function is {\it differentiable} in the sense of either \eqref{eq001}, \eqref{eq002}, or \eqref{eq003}, \eqref{eq002y}, \eqref{eq002z} (resp. \eqref{eq009}) if and only if it is differentiable in the usual sense.
\end{theorem}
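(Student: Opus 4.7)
The plan is to handle each of the listed definitions separately via the change-of-variable calculations already carried out (or sketched) in Section~\ref{s2}, and then to assemble the biconditional. In each case the idea is the same: substitute so that the argument of $f$ in the numerator becomes a new variable $s$ that tends to $t$ as $h\to 0$, at which point the difference quotient factors as (an explicit nonzero multiplier depending only on $t$ and $\alpha$) times the ordinary Newton quotient $(f(s)-f(t))/(s-t)$. Since the multiplier is finite and nonzero on the indicated range of $t$, the limit defining the generalized derivative exists (and is finite) if and only if $f'(t)$ exists (and is finite).

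Concretely, for \eqref{eq001} I would take $s=t+h t^{1-\alpha}$ (a $C^1$ local diffeomorphism in $h$ at $h=0$ because $t>0$), obtaining the factor $t^{1-\alpha}\neq 0$. For \eqref{eq003} the substitution $\eta=h\,\Gamma(\beta)/\Gamma(\beta-\alpha+1)$ reduces the statement to the case of \eqref{eq001}, with the constant being nonzero since $\beta\notin\mathbf{Z}^{-}$. For \eqref{eq002} I would use $s=te^{ht^{-\alpha}}$ and the elementary identity $\lim_{s\to t}(s-t)/(\ln s-\ln t)=t$ to recover the same multiplier $t^{1-\alpha}$. For \eqref{eq002y} (which subsumes \eqref{eq002x}) the substitution $s=t+hF(t,\alpha)$ yields exactly the relation \eqref{eq002z}, namely $N_F^{\alpha}f(t)=F(t,\alpha)\,f'(t)$, provided $F(t,\alpha)\neq 0$. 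Finally, for \eqref{eq009} the substitution $s=t+\sin(h)\cos(t)^{1-\alpha}$ together with $\arcsin(u)/u\to 1$ as $u\to 0$ produces the multiplier $\cos^{1-\alpha}(t)$, which is nonzero on $[0,b]\subset[0,\pi/2)$ and hence gives the biconditional there as well (this is the one case where $t=0$ is admissible, accounting for the parenthetical in the statement).

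The only real obstacle, and it is a minor one, is bookkeeping: one must verify in each case that the map $h\mapsto s$ is a continuous bijection of some neighborhood of $h=0$ onto a neighborhood of $s=t$, so that passing between $\lim_{h\to 0}$ and $\lim_{s\to t}$ is legitimate, and that the resulting multiplicative weight is nonzero on the specified domain. In fact every case here is a direct instance of the general template \eqref{eqq03} from \cite{JPA}, with $p_h(t,0)$ equal to the corresponding multiplier above; so once one recognizes this, the theorem is essentially a corollary of \eqref{eqq03} and the classical fact that multiplication by a nonzero scalar preserves existence and finiteness of limits. The case \eqref{eq002y} is precisely the content of the forthcoming Proposition~\ref{th2.5}, so citing it handles the only situation that is not a direct computation.
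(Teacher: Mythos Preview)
Your proposal is correct and follows essentially the same route as the paper: the theorem is stated immediately after the preliminary change-of-variable computations in Section~\ref{s2}, and the paper treats those computations as its proof, so your case-by-case substitutions and the appeal to Proposition~\ref{th2.5} for \eqref{eq002y} match exactly what the paper does.
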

\begin{remark}
It follows that the {\it conformable fractional derivatives} defined above (except for \eqref{eq009}) are actually differentiable in the ordinary sense whenever $t>0$, the only possible point of non-differentiability being at $t=0$.
\end{remark}

\section{Generalized derivatives}\label{s2a}
Let $ f : I \rightarrow\mathbb{R}$ , $ I \subseteq\mathbb{R}$ and $p : U_{\delta} \rightarrow\mathbb{R}$ where
$U_{\delta}= \{(t, h) : t \in I, |h| <\delta\}$ for some $\delta > 0$ is some generally unspecified neighborhood of $(t, 0)$. Unless $\delta$ is needed in a calculation we shall simply assume that this condition is always met. Of course, we always assume that the range of $p$ is contained in $I$. In the sequel, $L(I) \equiv  L_{1}(I)$ is the usual space of Lebesgue integrable functions on $I$. 

For a given $\alpha$, the {\bf generalized derivative} or $\mathbf{p}$-{\bf derivative} in \cite{JPA} is defined by 
\begin{equation}\label{eq005}
D_p f(t) = \lim_{h\to 0} \frac{f(p(t,h))-f(t)}{h}.
\end{equation}
whenever the limit exists and is finite. Occasionally, we'll introduce the parameter $\alpha$ mentioned above into the definition so that the limit
\begin{equation}\label{eq01}
D_p^{\alpha} f(t) = \lim_{h\to 0} \frac{f(p(t,h,\alpha))-f(t)}{h}.
\end{equation}
will then be called the $\mathbf{p}${\bf-derivative of order} ${\bm \alpha}$. Since $\alpha$ is a parameter \eqref{eq01} is actually a special case of \eqref{eq005}. 

The main hypotheses on the function $p$ are labeled $H1^{\pm}$ and $H2$ in \cite{JPA} and can be summarized together as follows:  

{\bf Hypothesis (H)}.  Given an interval $I\subset \R$, in addition to requiring that for $t\in I$, the function $p(t,h)$, $p_h(t,h)$ are continuous in a neighborhood of $h =0$, we ask that for $ t \in I$ and for all sufficiently small $\varepsilon > 0$, $p(t, h) = t \pm \varepsilon$ has a solution $h = h(t, \varepsilon)$ such that $ h\rightarrow 0$ as $\varepsilon\rightarrow 0$. 

Occasionally, we will assume, in addition, that $\frac{1}{p_{h}(\cdot , 0)}\in L(I).$ 

{\bf NOTATION:} The notations $D, D^\alpha, D_p, D_p^\alpha$ will be used interchangeably, occasionally for emphasis.



It is easy to see that the notion of $p$-differentiability as defined in \eqref{eq005} is very general in that it includes all the above definitions. To this end, it suffices to show that hypothesis (H) is satisfied and this is a simple matter, see also \cite{JPA}. 
\begin{theorem}\label{th2}
Each of the derivatives defined by \eqref{eq001}, \eqref{eq002}, \eqref{eq003}, \eqref{eq002x}, \eqref{eq002y}, and \eqref{eq009} above are $p$-derivatives for an appropriate function $p$ satisfying (H).
\end{theorem}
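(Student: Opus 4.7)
The plan is to handle the six definitions one at a time by exhibiting, in each case, the appropriate function $p$ (or $p(\cdot,\cdot,\alpha)$), verifying that $p$ and $p_h$ are continuous near $h=0$, and then checking that $p(t,h)=t\pm\varepsilon$ is solvable for $h=h(t,\varepsilon)$ with $h\to 0$ as $\varepsilon\to 0$. Reading off the functions directly from the definitions: \eqref{eq001} gives $p(t,h,\alpha)=t+ht^{1-\alpha}$ on $I=(0,\infty)$; \eqref{eq002} gives $p(t,h,\alpha)=t\exp(ht^{-\alpha})$ on $(0,\infty)$; \eqref{eq003} gives $p(t,h,\alpha)=t+\bigl(\Gamma(\beta)/\Gamma(\beta-\alpha+1)\bigr)ht^{1-\alpha}$; \eqref{eq002x} gives $p(t,h,\alpha)=t+he^{t^{-\alpha}}$; \eqref{eq002y} gives $p(t,h,\alpha)=t+hF(t,\alpha)$; and \eqref{eq009} gives $p(t,h,\alpha)=t+\sin(h)\,(\cos t)^{1-\alpha}$ on $[0,b]$ with $b<\pi/2$. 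In each case the expressions are $C^\infty$ (or at worst continuous in the sole variable $F$ for \eqref{eq002y}) jointly in $(t,h)$ on the indicated domain, so continuity of $p$ and of $p_h$ in a neighborhood of $h=0$ is immediate; this disposes of the first part of (H).

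For the solvability part, I would first handle the five cases where the equation $p(t,h)=t\pm\varepsilon$ can be inverted by hand. For \eqref{eq001}, \eqref{eq003}, \eqref{eq002x}, \eqref{eq002y}, the equation is linear in $h$, so for each fixed $t$ (with $t>0$, and, in the case of \eqref{eq002y}, with $F(t,\alpha)\neq 0$) one solves
\begin{equation*}
h(t,\varepsilon)=\pm\varepsilon/\kappa(t,\alpha),
\end{equation*}
where $\kappa(t,\alpha)$ is $t^{1-\alpha}$, $\bigl(\Gamma(\beta)/\Gamma(\beta-\alpha+1)\bigr)t^{1-\alpha}$, $e^{t^{-\alpha}}$, or $F(t,\alpha)$ respectively. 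Since $\kappa(t,\alpha)\neq 0$, this $h$ tends to $0$ as $\varepsilon\to 0$. For \eqref{eq002} one writes the equation $te^{ht^{-\alpha}}=t\pm\varepsilon$ and solves explicitly $h(t,\varepsilon)=t^{\alpha}\ln(1\pm\varepsilon/t)$, which is well defined for $\varepsilon$ sufficiently small (since $t>0$) and plainly tends to $0$ as $\varepsilon\to 0$. For \eqref{eq009} the equation $\sin(h)(\cos t)^{1-\alpha}=\pm\varepsilon$ gives $h(t,\varepsilon)=\arcsin\bigl(\pm\varepsilon(\cos t)^{\alpha-1}\bigr)$, valid for all small enough $\varepsilon$ because $\cos t$ is bounded away from $0$ on $[0,b]$; again $h\to 0$ as $\varepsilon\to 0$.

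As a uniform alternative, one can invoke the Implicit Function Theorem on each $p(t,h)-t\mp\varepsilon=0$ at $\varepsilon=0$, $h=0$, noting that in every case $p_h(t,0)\neq 0$: for \eqref{eq001}, \eqref{eq003} it equals $t^{1-\alpha}$ (up to a nonzero constant); for \eqref{eq002} it equals $t^{1-\alpha}$; for \eqref{eq002x} it equals $e^{t^{-\alpha}}>0$; for \eqref{eq002y} it equals $F(t,\alpha)$ (assumed nonzero); and for \eqref{eq009} it equals $(\cos t)^{1-\alpha}>0$ on $[0,b]$. This non-vanishing is what makes (H) go through, and it also matches the hypothesis $p_h(t,0)\neq 0$ used in the preceding theorem.

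The only genuinely delicate case is \eqref{eq002y}, because the function $F(t,\alpha)$ is left unspecified in \cite{viv}: if $F(t,\alpha)=0$ then (H) fails and the identity \eqref{eq002z} is vacuous. I would therefore flag this by stating the inclusion in Theorem~\ref{th2} under the standing assumption that $F(\cdot,\alpha)$ is continuous and nowhere zero on $I$, which is exactly the minimal hypothesis required both for (H) and for the relation $N_F^{\alpha}f(t)=F(t,\alpha)f'(t)$. Apart from that careful accounting, the proof is a straightforward verification definition by definition, and I expect no substantive obstacle beyond clarifying the domain restrictions ($t>0$ in most cases, $t\in[0,b]$ with $b<\pi/2$ in \eqref{eq009}) that make $p_h(t,0)\neq 0$ hold.
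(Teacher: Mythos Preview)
Your proposal is correct and follows exactly the approach the paper indicates: the paper does not give a detailed proof of Theorem~\ref{th2} but simply remarks that ``it suffices to show that hypothesis (H) is satisfied and this is a simple matter, see also \cite{JPA}.'' You have supplied precisely that simple verification, case by case, including the explicit inversions $h=h(t,\varepsilon)$ that the paper omits, and you correctly flag the need for $F(t,\alpha)\neq 0$ in \eqref{eq002y}, which the paper likewise imposes later (see Corollary~\ref{th2.6}).
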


The following basic property is expected of a generalized derivative and indeed holds for the class considered here and in \cite{JPA}. 
\begin{theorem}\label{th1x} {\rm (See Theorem 2.1 in \cite{JPA}.)}
Let $p$ satisfy (H). If $f$ is $p$-differentiable at $a$ then $f$ is continuous at $a$.
\end{theorem}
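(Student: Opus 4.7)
The plan is to mimic the classical proof that ordinary differentiability implies continuity, but using Hypothesis (H) to translate from the natural variable $s$ near $a$ to the increment $h$ that appears in the $p$-derivative.

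First I would reduce the problem to showing $f(s) - f(a) \to 0$ as $s \to a$. Given any $s$ close to $a$, write $s = a + \varepsilon$ (the case $s = a - \varepsilon$ is symmetric). By Hypothesis (H), for each such sufficiently small $\varepsilon > 0$ there exists $h = h(a,\varepsilon)$ with $p(a,h) = a + \varepsilon = s$ and $h(a,\varepsilon) \to 0$ as $\varepsilon \to 0^+$; an analogous $h$ is furnished for $s = a - \varepsilon$. This lets me re-express
\begin{equation*}
f(s) - f(a) \;=\; f(p(a,h)) - f(a) \;=\; h \cdot \frac{f(p(a,h)) - f(a)}{h}
\end{equation*}
whenever $h \neq 0$.

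Next I would take the limit as $s \to a$. Since $h \to 0$ when $\varepsilon \to 0$ (by (H)), the difference quotient on the right tends to $D_p f(a)$, which exists and is finite by hypothesis. The factor $h$ tends to $0$, so the product tends to $0 \cdot D_p f(a) = 0$. Doing this separately for $s \to a^+$ and $s \to a^-$ using the two branches of (H) yields $\lim_{s\to a} f(s) = f(a)$, i.e., continuity at $a$.

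The only real subtlety is that the rewriting above is valid only when $h \neq 0$; one must note that the continuity of $p(a,\cdot)$ at $0$ together with the injectivity implicit in the solvability statement of (H) ensures that $h(a,\varepsilon) \neq 0$ for $\varepsilon \neq 0$ (otherwise $p(a,0) = a \neq a + \varepsilon$). I would state this observation explicitly before taking the limit. Beyond this bookkeeping, the argument is a direct adaptation of the classical one, and no further analytic obstacle arises.
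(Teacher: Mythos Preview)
Your proof is correct and is precisely the natural argument: use (H) to invert $s\mapsto h$, factor $f(s)-f(a)=h\cdot\dfrac{f(p(a,h))-f(a)}{h}$, and let $h\to 0$. The paper itself does not give a proof here but simply cites Theorem~2.1 of \cite{JPA}; your write-up matches that classical approach, and your handling of the $h\neq 0$ issue (via $p(a,0)=a$, which indeed follows from (H) by continuity of $p(a,\cdot)$ and the fact that $h(a,\varepsilon)\to 0$ forces $p(a,0)=\lim_{\varepsilon\to 0}(a\pm\varepsilon)=a$) is the only point worth making explicit.
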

\begin{cor} \label{cor01} Let $f$ be $\alpha$-differentiable where the $\alpha$-derivative is defined in either \eqref{eq001}, \eqref{eq002}, \eqref{eq003}, \eqref{eq002x}, \eqref{eq002y} or \eqref{eq009}. Then $f$ is continuous there.
\end{cor}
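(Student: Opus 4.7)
The plan is to observe that this corollary is essentially a packaging of Theorems \ref{th2} and \ref{th1x} already stated in the excerpt. The strategy is therefore a short reduction: for each of the six limit definitions \eqref{eq001}, \eqref{eq002}, \eqref{eq003}, \eqref{eq002x}, \eqref{eq002y}, \eqref{eq009}, exhibit (or simply invoke) a function $p(t,h)$ (or $p(t,h,\alpha)$) such that the corresponding $\alpha$-derivative coincides with the generalized $p$-derivative $D_p f(a)$ defined in \eqref{eq005}, and then check that this $p$ satisfies Hypothesis (H) on the relevant interval. Once this identification is made, Theorem \ref{th1x} immediately yields continuity of $f$ at $a$.

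Concretely, I would read off the choices of $p$ directly from the definitions: $p(t,h,\alpha)=t+h\,t^{1-\alpha}$ for \eqref{eq001}, $p(t,h,\alpha)=t\exp(h\,t^{-\alpha})$ for \eqref{eq002}, $p(t,h,\alpha)=t+\Gamma(\beta)\Gamma(\beta-\alpha+1)^{-1}h\,t^{1-\alpha}$ for \eqref{eq003}, $p(t,h,\alpha)=t+h\,e^{t^{-\alpha}}$ for \eqref{eq002x}, $p(t,h,\alpha)=t+h\,F(t,\alpha)$ for \eqref{eq002y}, and $p(t,h,\alpha)=t+\sin(h)\cos(t)^{1-\alpha}$ for \eqref{eq009}. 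In each case $p(t,0)=t$, and $p$ together with $p_h$ is continuous in $h$ near $0$ on the specified domain ($t>0$ for the first five, and $t\in[0,b]$ with $b<\pi/2$ for the last). Moreover $p_h(t,0)$ is explicit and nonzero on the relevant domain ($t^{1-\alpha}$, $t^{1-\alpha}$, a nonzero constant multiple of $t^{1-\alpha}$, $e^{t^{-\alpha}}$, $F(t,\alpha)$, and $\cos^{1-\alpha}(t)$, respectively; for \eqref{eq002y} one needs only to assume the $F$ in question does not vanish at the point $a$, which is implicit in the literature where \eqref{eq002z} is used). Since $p_h(t,0)\neq 0$, the implicit function argument used to verify Hypothesis (H) in \cite{JPA} applies: the equation $p(a,h)=a\pm\varepsilon$ is locally solvable for $h$ with $h\to 0$ as $\varepsilon\to 0$.

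Having verified (H) at $a$ for the appropriate $p$, an appeal to Theorem \ref{th2} formalizes the identification $D^{\alpha}f(a)=D_p f(a)$ (which is really what Theorem \ref{th2} asserts), and then Theorem \ref{th1x} delivers continuity of $f$ at $a$ in every one of the six cases. No separate argument is required for each definition once (H) has been confirmed.

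The only step that is not entirely mechanical is the one concerning \eqref{eq002y}, since $F(t,\alpha)$ is left essentially unspecified in \cite{viv}; here I would explicitly flag the hypothesis $F(a,\alpha)\neq 0$ (and continuity of $F$ in $t$ near $a$), which is implicitly assumed whenever \eqref{eq002z} is used and which is exactly what is needed for (H) to hold. This will be the only real obstacle, and it is more of a bookkeeping subtlety than a technical difficulty; the remaining five cases are handled uniformly by the argument above.
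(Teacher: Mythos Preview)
Your proposal is correct and is exactly the approach the paper takes: the corollary is stated without separate proof because it follows immediately from combining Theorem~\ref{th2} (each of the six derivatives is a $p$-derivative for a $p$ satisfying (H)) with Theorem~\ref{th1x} ($p$-differentiability implies continuity). Your explicit listing of the six $p$-functions and verification of (H) merely unpacks what Theorem~\ref{th2} already asserts, so the argument is the same, just more detailed.
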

\begin{remark}We note that Corollary~\ref{cor01} includes Theorem 2.1 in \cite{kha}; Theorem 2.2 in \cite{kat}, and Theorem 2.2 in \cite{guz}).\end{remark}
The usual rules for differentiation are also valid in this more general scenario. In the sequel for a given function $p$ satisfying (H) we will write $D_p :=D$  or, if there is a parameter dependence, $D_p^\alpha :=D$, for simplicity. 
\begin{prop} \label{th2.2} {\rm (See Theorem 2.2 in \cite{JPA} for proofs.)}
\begin{itemize} 
\item[(a)] {\rm (The Sum Rule)}  If $f, g$ are both $p$-differentiable at $t\in I$ then so is their sum, $f + g$, and
$$D (f+ g)(t) =  D f(t) + D g(t).$$
\item[(b)] {\rm (The Product Rule)}  Assume that $p$ satisfies (H) and that for $t\in I$, $p(t, h)$ is continuous at $h=0$. If $f, g$ are both $p$-differentiable at $t\in I$ then so is their product, $f\cdot g$, and
$$D (f\cdot g)(t) =  f(t)\cdot D g(t) + g(t)\cdot D f(t). $$
\item[(c)] {\rm (The Quotient Rule)}  Assume that $p$ satisfies (H). If $f, g$ are both $p$-differentiable at $t\in I$ and $ g(t) \neq 0$ then so is their quotient, $f/g$, and
$$D \left (\frac{f}{g}\right)(t) =  \frac{g(t)\cdot D f(t) - f(t)\cdot D g(t)}{g(t)^2}. $$
\end{itemize}
\end{prop}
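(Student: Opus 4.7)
The plan is to imitate the classical proofs of the sum, product, and quotient rules for the ordinary derivative, replacing $t+h$ by $p(t,h)$ throughout and using Theorem~\ref{th1x} together with the continuity hypothesis on $p$ to justify the passages to the limit. A preliminary observation I would record is that hypothesis (H), together with continuity of $p(t,\cdot)$ at $0$, forces $p(t,0)=t$ (take $\varepsilon\to 0$ in $p(t,h(t,\varepsilon))=t\pm\varepsilon$ and use continuity), and hence $p(t,h)\to t$ as $h\to 0$. Combined with Theorem~\ref{th1x} this yields $f(p(t,h))\to f(t)$ and $g(p(t,h))\to g(t)$, which is the only nontrivial input needed.

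For (a), I would simply write
\[
\frac{(f+g)(p(t,h))-(f+g)(t)}{h}=\frac{f(p(t,h))-f(t)}{h}+\frac{g(p(t,h))-g(t)}{h}
\]
and invoke linearity of the limit; no hypothesis beyond existence of $Df(t)$ and $Dg(t)$ is used.

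For (b), the key algebraic step is the standard add-and-subtract trick: write
\[
\frac{f(p(t,h))g(p(t,h))-f(t)g(t)}{h}=f(p(t,h))\cdot\frac{g(p(t,h))-g(t)}{h}+g(t)\cdot\frac{f(p(t,h))-f(t)}{h}.
\]
Taking $h\to 0$, the second difference quotient tends to $Df(t)$, the first tends to $Dg(t)$, and the factor $f(p(t,h))$ tends to $f(t)$ by the preliminary observation (this is exactly where the continuity of $p(t,\cdot)$ at $0$ required in the statement of (b) is used). Summing gives the product formula.

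For (c), I would first write
\[
\frac{1}{h}\!\left(\frac{f(p(t,h))}{g(p(t,h))}-\frac{f(t)}{g(t)}\right)=\frac{1}{g(p(t,h))\,g(t)}\cdot\frac{f(p(t,h))g(t)-f(t)g(p(t,h))}{h},
\]
then add and subtract $f(t)g(t)$ in the numerator to obtain
\[
\frac{1}{g(p(t,h))\,g(t)}\!\left[g(t)\cdot\frac{f(p(t,h))-f(t)}{h}-f(t)\cdot\frac{g(p(t,h))-g(t)}{h}\right].
\]
Since $g(t)\neq 0$ and $g(p(t,h))\to g(t)$, the prefactor is well defined for small $|h|$ and tends to $1/g(t)^2$; the bracket tends to $g(t)Df(t)-f(t)Dg(t)$, giving the quotient formula. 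The only mild obstacle across the three parts is making sure $p(t,h)\to t$ as $h\to 0$ (so that the outer factors $f(p(t,h))$ and $1/g(p(t,h))$ behave correctly); this is precisely what hypothesis (H) plus the continuity assumption on $p$ provides, and it is also what makes Theorem~\ref{th1x} applicable to conclude continuity of $f$ and $g$ at $t$.
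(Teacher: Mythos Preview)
Your argument is correct and is exactly the classical one, carried over verbatim with $t+h$ replaced by $p(t,h)$. The paper does not give its own proof of this proposition: it simply refers the reader to Theorem~2.2 in \cite{JPA}, where the same standard add-and-subtract manipulation is carried out. Your preliminary observation that (H) together with continuity of $p(t,\cdot)$ at $0$ forces $p(t,0)=t$, and hence $p(t,h)\to t$, is precisely the point that makes Theorem~\ref{th1x} applicable and justifies the limits of the outer factors $f(p(t,h))$ and $1/g(p(t,h))$; this is the only place any real care is needed, and you have handled it correctly.
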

As a result we obtain,
\begin{cor}\label{th3} {\rm (See Theorem 2.2 in \cite{kha}; Theorem 2.3 in \cite{kat}; Theorem 4 in \cite{as}, and Theorem 2.3 in \cite{guz}.)}
For each of the definitions  \eqref{eq001}, \eqref{eq002}, \eqref{eq003}, \eqref{eq002x}, \eqref{eq002y} and \eqref{eq009} there holds an analog of the sum/product/and quotient rule for differentiation of corresponding $p$-derivatives.
\end{cor}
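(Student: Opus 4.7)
The plan is to deduce Corollary~\ref{th3} as an immediate specialization of Proposition~\ref{th2.2} once each of the listed derivatives has been recognized as a $p$-derivative for an appropriate $p$. Since Theorem~\ref{th2} already asserts that each of \eqref{eq001}, \eqref{eq002}, \eqref{eq003}, \eqref{eq002x}, \eqref{eq002y}, and \eqref{eq009} arises from some $p$ satisfying hypothesis (H), the work reduces to (i) naming the corresponding $p$ in each case and (ii) verifying the extra continuity hypothesis on $p(t,h)$ at $h=0$ needed for the product and quotient rules of Proposition~\ref{th2.2}.

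Concretely, I would list the six admissible choices of $p$: $p(t,h)=t+ht^{1-\alpha}$ for \eqref{eq001}; $p(t,h)=t\exp(ht^{-\alpha})$ for \eqref{eq002}; $p(t,h)=t+\Gamma(\beta)\Gamma(\beta-\alpha+1)^{-1}ht^{1-\alpha}$ for \eqref{eq003}; $p(t,h)=t+he^{t^{-\alpha}}$ for \eqref{eq002x}; $p(t,h)=t+hF(t,\alpha)$ for \eqref{eq002y}; and $p(t,h)=t+\sin(h)(\cos t)^{1-\alpha}$ for \eqref{eq009}. In each of these, $p$ is manifestly continuous in $h$ at $h=0$ on the relevant range of $t$ (that is, $t>0$ in the first five cases and $t\in[0,b]$ with $b<\pi/2$ in the last), and each reduces to $p(t,0)=t$, so the continuity premise in part (b) of Proposition~\ref{th2.2} is satisfied.

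With these facts in hand I would conclude by invoking parts (a), (b), (c) of Proposition~\ref{th2.2} individually for each $p$: the sum rule follows from (a) without any further assumption; the product rule follows from (b) using the continuity just noted; the quotient rule follows from (c) under the additional standing hypothesis $g(t)\neq 0$. Since the operators $D$ in Proposition~\ref{th2.2} are identified, via Theorem~\ref{th2}, with the corresponding symbols $T_0^\alpha$, $D^{GFD}$, $N_F^\alpha$, and $D^\alpha$, each differentiation rule transfers verbatim to these definitions.

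I expect no substantive obstacle: the content is purely a bookkeeping statement that the hypotheses on $p$ used to prove Proposition~\ref{th2.2} (which are verified in \cite{JPA}) hold for these specific $p$'s. The only mild caveat worth flagging in the text concerns \eqref{eq002y}, where the formula depends on the unspecified $F(t,\alpha)$; here the product and quotient rules require that $F(\cdot,\alpha)$ be continuous, a condition inherited automatically if \eqref{eq002z} is to hold pointwise, and no further conditions on $F$ are needed for the sum rule.
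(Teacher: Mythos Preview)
Your proposal is correct and follows exactly the paper's approach: the paper treats the corollary as an immediate consequence of Proposition~\ref{th2.2} together with Theorem~\ref{th2}, writing only ``As a result we obtain'' before stating it. Your explicit listing of the six $p$'s and the verification of continuity at $h=0$ is more detailed than what the paper provides, but the underlying logic is identical.
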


\begin{prop}\label{th2.4} {\rm (See \cite{JPA}, Theorem 2.4)}
Assume (H). Let $f$ be continuous and non-constant on $I$, and let $f$ be $p$-differentiable at $t\in I$. Let $g$ be defined on the range of $f$ and let $g$ be differentiable at $f(t)$. Then the composition $g\circ f $ is $p$-differentiable at $t$ and
\begin{equation*}
D (g\circ f )(t) = g^{\prime}(f(t))\,D f(t).
\end{equation*}
\end{prop}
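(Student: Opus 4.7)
\textbf{Proof plan for Proposition~\ref{th2.4}.} The plan is to reduce to the ordinary chain rule by using the standard auxiliary-function trick that sidesteps the possibility of $f(p(t,h))=f(t)$. The target limit is
\[
D(g\circ f)(t)\;=\;\lim_{h\to 0}\,\frac{g(f(p(t,h)))-g(f(t))}{h},
\]
so the first move is to factor the numerator in a way that remains valid even when $f(p(t,h))$ happens to coincide with $f(t)$.

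To that end, define $\phi$ on the domain of $g$ by
\[
\phi(y) \;=\; \begin{cases} \dfrac{g(y)-g(f(t))}{y-f(t)}, & y \neq f(t),\\[4pt] g^{\prime}(f(t)), & y = f(t).\end{cases}
\]
Because $g$ is differentiable at $f(t)$, $\phi$ is continuous at the point $y=f(t)$, and by construction one has the identity $g(y)-g(f(t)) = \phi(y)\,(y-f(t))$ for every admissible $y$. Setting $y=f(p(t,h))$ yields the pointwise factorization
\[
\frac{g(f(p(t,h)))-g(f(t))}{h} \;=\; \phi\bigl(f(p(t,h))\bigr)\cdot \frac{f(p(t,h))-f(t)}{h},
\]
valid for all sufficiently small $h$, with no division-by-zero issue.

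Now I take $h\to 0$ and handle the two factors separately. Hypothesis (H) gives the continuity of $p(t,\cdot)$ at $h=0$ together with the solvability condition $p(t,h)=t\pm\varepsilon$ for $h\to 0$ as $\varepsilon\to 0$; together these force $p(t,0)=t$, so $p(t,h)\to t$. Since $f$ is $p$-differentiable at $t$, Theorem~\ref{th1x} gives continuity of $f$ at $t$, so $f(p(t,h))\to f(t)$. Continuity of $\phi$ at $f(t)$ then yields $\phi(f(p(t,h)))\to \phi(f(t))=g^{\prime}(f(t))$. The second factor tends to $Df(t)$ by definition of the $p$-derivative. Multiplying the limits gives $D(g\circ f)(t)=g^{\prime}(f(t))\,Df(t)$, as claimed.

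No single step is a serious obstacle; the one place to be careful is the passage $p(t,h)\to t$, which relies on combining continuity of $p(t,\cdot)$ at $0$ with the inversion condition in (H) rather than being assumed outright. The non-constancy and continuity hypotheses on $f$ are mostly safeguards — continuity already follows from Theorem~\ref{th1x}, and the $\phi$-factorization eliminates any need for $f(p(t,h))\neq f(t)$ — so the argument really rests on the two ingredients: differentiability of $g$ at $f(t)$ (to get $\phi$ continuous there) and continuity of $f\circ p(t,\cdot)$ at $h=0$ (to evaluate $\phi$ in the limit).
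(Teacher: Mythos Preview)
Your argument is correct; it is the standard Carath\'eodory-style proof of the chain rule, adapted to the $p$-derivative setting, and every step is justified by the hypotheses together with Theorem~\ref{th1x}. The paper itself does not supply a proof of this proposition, instead deferring to \cite{JPA}, Theorem~2.4, so there is no in-paper argument to compare against.
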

\begin{cor} {\rm (See \cite{kha}, p.66 (iv), although the Chain Rule is not stated correctly there, and Theorem 2.3 in \cite{kat}.)}
For each of the definitions  \eqref{eq001}, \eqref{eq002}, \eqref{eq003}, \eqref{eq002x}, \eqref{eq002y} and \eqref{eq009} there holds an analog of the Chain rule for differentiation of corresponding $p$-derivatives in the form
$$ D  (g\circ f)(t) = g^\prime(f(t))\, D f(t),$$
where $D:=D^\alpha$ (resp. $D:=D_p^{\alpha}$) is the corresponding $p$-derivative in question defined in \eqref{eq005} (resp. \eqref{eq01}). 
\end{cor}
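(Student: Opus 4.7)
The plan is to invoke Proposition~\ref{th2.4} directly, after exhibiting each of the six derivatives as a special case of the $p$-derivative formalism. Theorem~\ref{th2} has already done the heavy lifting by certifying that each of \eqref{eq001}, \eqref{eq002}, \eqref{eq003}, \eqref{eq002x}, \eqref{eq002y} and \eqref{eq009} arises from a choice of $p$ satisfying Hypothesis (H); there is therefore nothing left to verify about $p$ itself, and the corollary reduces to a bookkeeping exercise.

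First I would record (or just point back to) the associated $p$-function in each case: $p(t,h,\alpha)=t+ht^{1-\alpha}$ for \eqref{eq001}; $p(t,h,\alpha)=te^{ht^{-\alpha}}$ for \eqref{eq002}; $p(t,h,\alpha)=t+\tfrac{\Gamma(\beta)}{\Gamma(\beta-\alpha+1)}ht^{1-\alpha}$ for \eqref{eq003}; $p(t,h,\alpha)=t+he^{t^{-\alpha}}$ for \eqref{eq002x}; $p(t,h,\alpha)=t+hF(t,\alpha)$ for \eqref{eq002y}; and $p(t,h,\alpha)=t+\sin(h)\cos(t)^{1-\alpha}$ for \eqref{eq009}. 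In every case the $p$-derivative $D_p^\alpha$ agrees with the derivative appearing in the cited definition, on the respective domain ($t>0$ in the first five and $t\in[0,b]$ with $b<\pi/2$ in the last).

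Next I would apply Proposition~\ref{th2.4} to the composition $g\circ f$: assuming $f$ is $p$-differentiable at $t$ (hence continuous at $t$ by Theorem~\ref{th1x}) and $g$ is differentiable in the ordinary sense at $f(t)$, the proposition delivers
$$
D(g\circ f)(t)=g'(f(t))\,Df(t),
$$
which is precisely the chain rule in the stated form, once $D$ is read as the appropriate $D^\alpha$ or $D_p^\alpha$. The ``continuous and non-constant on $I$'' hypothesis of Proposition~\ref{th2.4} is harmless here: continuity at the relevant point is automatic from Theorem~\ref{th1x}, and if $f$ is constant on a neighbourhood of $t$ then both sides of the identity vanish trivially, so the conclusion is not affected.

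There is no substantive obstacle; the only place deserving a brief remark is \eqref{eq002y}, where $F(t,\alpha)$ was left essentially unspecified in \cite{viv}. For the argument above to go through, one must at least require $F$ to be continuous in $t$ near the point under consideration, which is exactly the regularity needed in Theorem~\ref{th2} to check (H) for the corresponding $p$. Under that mild proviso the chain rule follows uniformly for all six definitions from the single Proposition~\ref{th2.4}.
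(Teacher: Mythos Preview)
Your proposal is correct and mirrors the paper's own (implicit) argument: the corollary is stated without proof, as an immediate consequence of Proposition~\ref{th2.4} together with Theorem~\ref{th2}, which is precisely the route you take. Your extra care about the ``continuous and non-constant'' hypothesis and the unspecified $F$ in \eqref{eq002y} goes slightly beyond what the paper writes, but is consistent with it.
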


We now proceed to formulating fundamental theorems of a calculus for generalized derivatives. We start with a weak maximum principle.

\begin{theorem}\label{th3.3}
Let $f : [a,b] \to \mathbb{R}$ be such that $D_{p}f(t)$ exists for every $t \in (a, b)$. If $f$ has a local maximum (resp. minimum) at $c \in (a,b)$ and for each $t \in (a, b)$,
 and for all sufficiently small $|h|$ we have, 
 \begin{equation}\label{eq3.7}
p(t,h)<t, \quad h<0 \quad(resp. \ p(t,h)>t, \quad h>0),
\end{equation}
\begin{equation}\label{eq3.8}
p(t,h)>t, \quad h>0 \quad(resp.\  p(t,h)<t, \quad h<0),
\end{equation}
 then $D_{p}f(c)=0$.
\end{theorem}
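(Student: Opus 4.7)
The plan is to imitate the classical Fermat argument for an interior extremum, replacing $c+h$ by $p(c,h)$ and using the sign of $h$ together with conditions \eqref{eq3.7}-\eqref{eq3.8} to control the sign of the difference quotient. The argument should closely mirror the elementary proof of Fermat's theorem.

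First, I would invoke Hypothesis (H) to conclude that $p(c,\cdot)$ is continuous at $h=0$ with $p(c,0)=c$; the latter equality is forced by the solvability condition $p(c,h)=c\pm\varepsilon$ with $h\to 0$ as $\varepsilon\to 0$. Combined with the local-maximum property at $c$, this furnishes some $h_0>0$ such that $p(c,h)$ lies in the $\delta$-neighborhood on which $f\le f(c)$, so that
\[ f(p(c,h)) - f(c) \le 0 \quad \text{for all } |h| < h_0. \]
Next I would take one-sided limits separately. For $0<h<h_0$, condition \eqref{eq3.8} ensures $p(c,h)>c$, and the quotient $(f(p(c,h))-f(c))/h$ is a nonpositive number divided by a positive one, hence nonpositive; letting $h\to 0^+$ yields $D_p f(c) \le 0$. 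For $-h_0<h<0$, condition \eqref{eq3.7} ensures $p(c,h)<c$, and the same quotient is nonpositive over negative, hence nonnegative; letting $h\to 0^-$ yields $D_p f(c) \ge 0$. Since the two-sided limit $D_p f(c)$ exists by hypothesis, both one-sided limits coincide with it, and the two inequalities force $D_p f(c)=0$.

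The minimum case follows either by applying the maximum argument to $-f$ or by reversing all inequalities, using the corresponding ``resp.'' clauses in \eqref{eq3.7}-\eqref{eq3.8}. The main subtlety is confirming that $p(c,h)\to c$ as $h\to 0$ so that the local-max inequality actually applies at $p(c,h)$; this rests on the continuity clause of Hypothesis (H) together with $p(c,0)=c$. Once that is secured, the rest is a straightforward sign-chase, and I do not anticipate any deeper obstacle.
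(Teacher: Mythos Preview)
Your proposal is correct and follows essentially the same Fermat-type argument as the paper: bound the difference quotient from above using $h\to 0^+$ and from below using $h\to 0^-$, then use existence of the two-sided limit to conclude it vanishes. The only difference is that you are more explicit than the paper about why $f(p(c,h))\le f(c)$ holds for a \emph{local} maximum---you invoke continuity of $p(c,\cdot)$ at $h=0$ with $p(c,0)=c$ (from (H)) to keep $p(c,h)$ inside the neighborhood where the local-max inequality applies, whereas the paper simply asserts this step after citing \eqref{eq3.7}--\eqref{eq3.8}.
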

\begin{proof}
We give the proof in the case where $f(c)$ is a local maximum. By hypothesis \eqref{eq3.8}, there exists $\delta_{0}$ such that 
$$p(t,h)>t, \quad 0< h <\delta_{0}.$$
Since $$p(c,h)>c, \quad 0< h <\delta_{0},$$ and $f(c)$ is a maximum, we can conclude that $$f(p(c,h))\le f(c).$$ Therefore $$\lim_{h \to 0^{+}}\frac{f(p(c,h))-f(c)}{h}\le 0.$$ Hence, $D_{p}f(c)\le 0$.
Next, by hypothesis \eqref{eq3.7}, there exists $\delta_{1}$ such that 
$$p(c,h)<c, \quad -\delta_1 < h < 0.$$ As before, since $f(c)$ is a maximum,  $$\lim_{h \to 0^{-}}\frac{f(p(c,h))-f(c)}{h}\ge 0.$$ So, $D_{p}f(c)\ge 0$. However, since $D_{p}f(c)$ exists, it follows that $D_{p}f(c)= 0$. The proof in the case where $f(c)$ is a minimum is similar, and therefore omitted.
\end{proof}

Next, we present a weak form of a general mean value theorem.
\begin{theorem}\label{weakmvt} {\rm [A generalized Mean Value Theorem]}
Let $p$ satisfy the conditions of Theorem~\ref{th3.3} and let $f$ be continuous on $[a, b]$. As an additional condition on $p$, assume that the $p$-derivative of the function whose values are $t-a$ exists for all $t\in (a, b)$. Then there is a constant $c \in (a, b)$ such that 
\begin{equation}\label{mvtc}
D_p f(c) = \frac{f(b)-f(a)}{b-a} k,
\end{equation}
where the constant $k$ is the $p$-derivative of the function whose values are $t-a$ evaluated at $t=c$, i.e., $ k = D_p(t-a) \bigg |_{t=c}.$
\end{theorem}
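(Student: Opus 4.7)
The plan is to mimic the classical proof: introduce the standard auxiliary chord-subtraction function, reduce to an interior extremum, and invoke the weak maximum principle (Theorem~\ref{th3.3}) together with linearity of $D_p$.

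More precisely, I would define
\[
g(t) = f(t) - f(a) - \frac{f(b)-f(a)}{b-a}\,(t-a),
\]
which is continuous on $[a,b]$ because $f$ is and because $t\mapsto t-a$ is. By construction $g(a)=g(b)=0$, and since $D_pf$ and $D_p(t-a)$ exist on $(a,b)$, the sum rule (Proposition~\ref{th2.2}(a)) together with the obvious fact that constants have zero $p$-derivative and the equally immediate homogeneity $D_p(\lambda f)=\lambda D_p f$ (both read straight off definition~\eqref{eq005}) show that $D_p g$ exists on $(a,b)$ with
\[
D_p g(t) = D_p f(t) - \frac{f(b)-f(a)}{b-a}\, D_p(t-a).
\]

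Next I would apply the extreme value theorem to $g$ on $[a,b]$. If the maximum and minimum of $g$ are both $0$, then $g\equiv 0$ and identity~\eqref{mvtc} holds for \emph{every} $c\in(a,b)$, so the conclusion is trivial. Otherwise at least one of the two extrema is strictly positive or strictly negative, and since $g(a)=g(b)=0$ that extremum must be attained at some interior point $c\in(a,b)$. At this $c$, the hypotheses on $p$ inherited from Theorem~\ref{th3.3} apply, so the weak maximum principle gives $D_p g(c)=0$. Plugging this into the formula for $D_p g(c)$ above and setting $k = D_p(t-a)\big|_{t=c}$ yields exactly~\eqref{mvtc}.

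The only genuinely delicate point is bookkeeping: one must be sure that $D_p g(c)$ really does exist and decomposes as claimed, which is why the hypothesis that $D_p(t-a)$ exist on $(a,b)$ has been explicitly added to the statement — without this one cannot separate the ``$f$'' and the ``chord'' parts of $g$ via the sum rule. Everything else (the extreme value theorem, the case split, and the application of Theorem~\ref{th3.3}) is routine. I do not anticipate a real obstacle beyond verifying that the trivial case $g\equiv0$ is handled and that the sign conditions~\eqref{eq3.7}–\eqref{eq3.8} on $p$ — which are what make the weak maximum principle available — are indeed the only structural requirement passed through from Theorem~\ref{th3.3}.
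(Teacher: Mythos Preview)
Your proposal is correct and follows essentially the same approach as the paper: both define the auxiliary function $g(t)=f(t)-f(a)-\frac{f(b)-f(a)}{b-a}(t-a)$, compute its $p$-derivative via linearity, and then use continuity plus $g(a)=g(b)=0$ to locate an interior extremum where Theorem~\ref{th3.3} applies. Your case split (both extrema equal to $0$ versus at least one nonzero) is in fact slightly cleaner than the paper's, which splits on whether $D_p h\equiv 0$, but the substance is identical.
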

\begin{proof} Define $$h(t) = f(t) - f(a) - \frac{f(b)-f(a)}{b-a} (t-a).$$ Then, by hypothesis, $h$ is $p$-differentiable on $(a, b)$, continuous on $[a,b]$ and 
$$D_p h (t) = D_p f(t) - \frac{f(b)-f(a)}{b-a} D_p (t-a).$$
In the event that $D_p h(t) = 0$ for all $t \in (a, b)$, $c$ can be chosen to be any point in $(a, b)$. If $D_p h(t) \neq 0$ for all such $t$ we may assume, without loss of generality, that $h(t) > 0$ somewhere. Since $h(a)=h(b)=0$, and $h$ is continuous, it must achieve its maximum at, say, $t=c \in (a, b)$. By Theorem~\ref{th3.3}, $D_p h(c)=0$. The result follows.
\end{proof}
\begin{theorem} \label{th3.5y} If, in addition to the hypotheses in Theorem~\ref{weakmvt}, we have $p(t, 0)=t$, for every $t$, and $p_h(t, 0)$ exists, then 
$$ D_p f(c) = \frac{f(b)-f(a)}{b-a}\, p_h(c, 0).$$
\end{theorem}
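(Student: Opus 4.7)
The plan is to simply identify the constant $k$ appearing in Theorem~\ref{weakmvt} in terms of the partial derivative $p_h(c,0)$, under the two extra hypotheses. Once $k = p_h(c,0)$ is established, the desired formula is immediate from \eqref{mvtc}.

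First I would invoke Theorem~\ref{weakmvt} directly: it already guarantees the existence of a $c \in (a,b)$ with
\[
D_p f(c) \;=\; \frac{f(b)-f(a)}{b-a}\, k, \qquad k \;=\; D_p(t-a)\bigg|_{t=c}.
\]
So the only real work is to evaluate $k$. Writing $g(t) = t - a$, the definition of the $p$-derivative yields
\[
k \;=\; \lim_{h\to 0} \frac{g(p(c,h)) - g(c)}{h} \;=\; \lim_{h\to 0} \frac{p(c,h) - c}{h}.
\]
The additional hypothesis $p(t,0) = t$ for all $t$ gives $p(c,0) = c$, so the numerator above is exactly $p(c,h) - p(c,0)$. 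Consequently
\[
k \;=\; \lim_{h\to 0} \frac{p(c,h) - p(c,0)}{h} \;=\; p_h(c,0),
\]
where the last equality uses the assumed existence of the partial derivative $p_h(c,0)$.

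Substituting $k = p_h(c,0)$ into \eqref{mvtc} delivers the claimed identity. There is no genuine obstacle here; the only point worth flagging is that both extra hypotheses are used in a minimal but essential way, namely $p(t,0)=t$ to rewrite the difference quotient as a difference quotient of $p(c,\cdot)$ at $h=0$, and the existence of $p_h(c,0)$ to pass to the limit. Note that this step also reconciles the present framework with the basic identity \eqref{eqq03} from the introduction, specialized to the function $t \mapsto t-a$.
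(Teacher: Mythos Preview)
Your proof is correct and follows essentially the same approach as the paper: both arguments compute the constant $k$ from Theorem~\ref{weakmvt} by evaluating $D_p(t-a)$ at $t=c$, using $p(c,0)=c$ to rewrite the difference quotient as $(p(c,h)-p(c,0))/h$ and then invoking the existence of $p_h(c,0)$.
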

\begin{proof}
Since $$D_p(t-a) \bigg |_{t=c} = \lim_{h\to 0} \frac{p(t,h)-a -(t-a)}{h} = \lim_{h\to 0} \frac{p(t,h) -p(t,0)}{h} = p_h(t,0),$$ the result follows.
\end{proof}

In the next two sections we can formulate more precise results by considering the two cases, $p_h(t, 0)\neq 0$ and $p_h (t, 0)=0.$ 

\section{The case where $p_h(t,0)\neq 0$.}

We'll see below that this restriction on $p$ is the most common one and all fractional derivatives presented (i.e., \eqref{eq001}, \eqref{eq002}, \eqref{eq003}, \eqref{eq002x}, \eqref{eq002y}, and \eqref{eq009}) satisfy this condition. 

The next result, when combined with Theorem~\ref{th1}, allows us to transform $p$-derivatives into ordinary derivatives.
\begin{prop}\label{th2.5} {\rm (See Theorem 2.4 in \cite{JPA})}
Let $p$ satisfy hypothesis (H). In addition, for $t\in I$, let 
\begin{equation}\label{eq6x}
\lim_{\varepsilon\to 0} \frac{\varepsilon}{h(t,\varepsilon)} \neq 0.
\end{equation}  Then $f$ is differentiable at $t$ iff and only if  $f$ is $p$-differentiable at $t$. In addition, 
\begin{equation}\label{eq5x}
D f(t) = p_h(t,0)\, f^{\prime}(t).
\end{equation}
\end{prop}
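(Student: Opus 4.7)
The plan is to prove both directions of the equivalence by a single substitution identity for the difference quotient, reading off the formula $Df(t)=p_h(t,0)\,f'(t)$ along the way. A preliminary observation I would record first is that $p(t,0)=t$: by (H), for each small $\varepsilon>0$ there is $h(t,\varepsilon)\to 0$ with $p(t,h(t,\varepsilon))=t+\varepsilon$, and letting $\varepsilon\to 0^{+}$ in this relation and using continuity of $p(t,\cdot)$ at $0$ forces $p(t,0)=t$.

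Next I would identify the limit appearing in \eqref{eq6x}. Applying the mean value theorem to $h\mapsto p(t,h)$ on the interval with endpoints $0$ and $h(t,\varepsilon)$ gives
$$\varepsilon \;=\; p(t,h(t,\varepsilon))-p(t,0) \;=\; p_h(t,\theta)\,h(t,\varepsilon)$$
for some $\theta$ between $0$ and $h(t,\varepsilon)$. Since $h(t,\varepsilon)\to 0$ forces $\theta\to 0$, continuity of $p_h(t,\cdot)$ at $0$ yields
$$\lim_{\varepsilon\to 0}\frac{\varepsilon}{h(t,\varepsilon)} \;=\; p_h(t,0).$$
Consequently the nondegeneracy hypothesis \eqref{eq6x} is equivalent to $p_h(t,0)\neq 0$, and when it holds the limiting value is precisely $p_h(t,0)$.

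Finally I would execute the substitution $\varepsilon:=p(t,h)-t$, which tends to $0$ as $h\to 0$ and has inverse $h=h(t,\varepsilon)$. The factorization
$$\frac{f(p(t,h))-f(t)}{h} \;=\; \frac{f(t+\varepsilon)-f(t)}{\varepsilon}\cdot\frac{\varepsilon}{h(t,\varepsilon)}$$
then delivers both implications at once. If $f'(t)$ exists, the right-hand side converges to $f'(t)\cdot p_h(t,0)$, giving $p$-differentiability at $t$ together with \eqref{eq5x}. Conversely, if $Df(t)$ exists and $p_h(t,0)\neq 0$, then dividing the identity by the nonzero factor $\varepsilon/h(t,\varepsilon)$ shows that the ordinary difference quotient has the limit $Df(t)/p_h(t,0)$, so $f$ is differentiable at $t$ and \eqref{eq5x} holds again. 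The one subtlety to address carefully is that, as $h$ runs over a two-sided punctured neighbourhood of $0$, the image $\varepsilon=p(t,h)-t$ should itself trace out a two-sided punctured neighbourhood of $0$, so that one-sided anomalies do not contaminate the limit; this is the main obstacle, and it is overcome by noting that $p_h(t,0)\neq 0$ together with continuity of $p_h$ forces strict monotonicity of $p(t,\cdot)$ near $0$, which dovetails with the $\pm\varepsilon$ clause of~(H) to validate the change of variable in both limits.
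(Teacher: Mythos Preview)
Your argument is correct and is precisely the substitution method the paper itself uses in Section~\ref{s2} to handle the concrete cases \eqref{eq001}--\eqref{eq009}; the paper does not give a self-contained proof of this proposition, deferring instead to \cite{JPA}. One small addition worth noting: you observe (via the mean value theorem) that the limit in \eqref{eq6x} is exactly $p_h(t,0)$, so the hypothesis \eqref{eq6x} is in fact equivalent to $p_h(t,0)\neq 0$ under (H); the paper's remark following the proposition treats these as two separate assumptions, so your observation sharpens the statement slightly.
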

\begin{remark}
Violation of either \eqref{eq6x} or the tacit assumption, $p_h(t,0)\neq 0$, can void \eqref{eq5x}, see Remark 2.4 in \cite{JPA} and the example therein.
\end{remark}

\begin{cor}\label{th2.6} {\rm (See Theorem 2.2 in \cite{kha}; Theorem 2.3 in \cite{kat}; and Theorem 1 in \cite{as}.)}
Let the $\alpha$-derivative be defined as in either \eqref{eq001} or \eqref{eq002} and let $f$ be $\alpha$-differentiable at $t$. Then,
$$T_0^\alpha f(t) = t^{1-\alpha}\, f^\prime (t).$$ 
If $f$ is $\alpha$-differentiable in the sense of \eqref{eq003} then,
\begin{equation}\label{eq5xx}
D^{GFD} f(t) = \frac{\Gamma(\beta)}{\Gamma(\beta-\alpha+1)}\, t^{1-\alpha}\, f^\prime (t).
\end{equation}
If $f$ is $\alpha$-differentiable in the sense of \eqref{eq009} then, for $t\in [0,b]$, $b < \pi/2$,
\begin{equation}\label{eq5xy}
Df(t) = (\cos t)^{1-\alpha}\, f^\prime (t).
\end{equation}
Similar results hold for derivatives defined by either \eqref{eq002x} or \eqref{eq002y} (if $F(t,\alpha)\neq 0.$)
\end{cor}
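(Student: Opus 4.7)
The plan is to derive every claim in the corollary as a direct application of Proposition~\ref{th2.5}. Since Theorem~\ref{th2} already asserts that each of the definitions \eqref{eq001}--\eqref{eq002y} and \eqref{eq009} is a $p$-derivative for some $p$ satisfying hypothesis (H), the substantive work reduces to (i) verifying the nondegeneracy condition \eqref{eq6x} and (ii) computing $p_h(t,0)$ in each case. Once these are in hand, formula \eqref{eq5x} yields the stated result by plain substitution.

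First, I would handle the two defining equations \eqref{eq001} and \eqref{eq002} in one stroke, since Section~\ref{s2} already shows they coincide for $t>0$. Taking $p(t,h,\alpha)=t+ht^{1-\alpha}$ as the representative, solving $p(t,h,\alpha)=t+\varepsilon$ gives $h(t,\varepsilon)=\varepsilon\, t^{\alpha-1}$, so $\varepsilon/h(t,\varepsilon)=t^{1-\alpha}\neq 0$ for $t>0$, verifying \eqref{eq6x}; and $p_h(t,0)=t^{1-\alpha}$. Proposition~\ref{th2.5} then gives $T_0^\alpha f(t)=t^{1-\alpha}f'(t)$. For \eqref{eq002}, one can independently check that the Katugampola choice $p(t,h,\alpha)=t\,e^{ht^{-\alpha}}$ has $p_h(t,0)=t\cdot t^{-\alpha}=t^{1-\alpha}$, so the formula is the same.

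Next, for \eqref{eq003}, rather than redoing the computation I would simply invoke the rescaling observation from Section~\ref{s2}: setting $c=\Gamma(\beta)/\Gamma(\beta-\alpha+1)$, one has $p(t,h)=t+c\,h\,t^{1-\alpha}$, whence $p_h(t,0)=c\,t^{1-\alpha}$, and \eqref{eq5xx} follows from \eqref{eq5x}. For \eqref{eq009}, I take $p(t,h,\alpha)=t+\sin(h)(\cos t)^{1-\alpha}$; then $p_h(t,h)=\cos(h)(\cos t)^{1-\alpha}$, so $p_h(t,0)=(\cos t)^{1-\alpha}$, which is nonzero on $[0,b]$ with $b<\pi/2$. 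Inverting $p(t,h,\alpha)=t+\varepsilon$ gives $h=\arcsin(\varepsilon(\cos t)^{\alpha-1})$, so $\varepsilon/h\to(\cos t)^{1-\alpha}\neq 0$ as $\varepsilon\to 0$, verifying \eqref{eq6x}; Proposition~\ref{th2.5} then yields \eqref{eq5xy}. Finally, the $N$-derivative \eqref{eq002x} corresponds to $p(t,h)=t+h\,e^{t^{-\alpha}}$, giving $p_h(t,0)=e^{t^{-\alpha}}$; and \eqref{eq002y} corresponds to $p(t,h)=t+hF(t,\alpha)$, giving $p_h(t,0)=F(t,\alpha)$, with \eqref{eq6x} holding whenever $F(t,\alpha)\neq 0$.

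There is really no hard step in this proof: everything follows from Proposition~\ref{th2.5}, and the only possible friction would be verifying \eqref{eq6x} in the cases \eqref{eq002} and \eqref{eq009}, where $h$ must be obtained by inverting an exponential or a sine. Even there, the continuity of the inverse functions near the origin makes the limit $\lim_{\varepsilon\to 0}\varepsilon/h(t,\varepsilon)$ equal to $p_h(t,0)$, so the nondegeneracy condition collapses to the already-computed nonvanishing of $p_h(t,0)$. Thus the proof is essentially a unified bookkeeping exercise, and its real content is that all these seemingly disparate definitions are manifestations of the single formula \eqref{eq5x}.
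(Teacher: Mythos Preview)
Your proposal is correct and matches the paper's approach exactly: the corollary is stated without explicit proof, as an immediate consequence of Proposition~\ref{th2.5}, and your argument simply spells out the routine computation of $p_h(t,0)$ and the verification of \eqref{eq6x} for each of the specific choices of $p$. There is nothing to add.
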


\begin{cor}\label{th2.7}
Let $t>0$ (resp. $t\geq 0$). Then $f$ is $\alpha$-differentiable at $t$ in the sense of anyone of \eqref{eq001}, \eqref{eq002}, or \eqref{eq003}, (resp. \eqref{eq009}) if and only if $f$ is differentiable at $t$.
\end{cor}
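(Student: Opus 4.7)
The plan is to deduce Corollary~\ref{th2.7} as a direct consequence of Proposition~\ref{th2.5}, applied to each of the explicit choices of $p$ corresponding to the listed definitions. For each definition I would first read off the function $p(t,h,\alpha)$: namely $p=t+ht^{1-\alpha}$ for \eqref{eq001}, $p=t\,e^{h t^{-\alpha}}$ for \eqref{eq002}, $p=t+\frac{\Gamma(\beta)}{\Gamma(\beta-\alpha+1)}h t^{1-\alpha}$ for \eqref{eq003}, and $p=t+\sin(h)(\cos t)^{1-\alpha}$ for \eqref{eq009}. In each case, I would compute $p_h(t,0)$ and observe that it equals $t^{1-\alpha}$, $t^{1-\alpha}$, $\frac{\Gamma(\beta)}{\Gamma(\beta-\alpha+1)}t^{1-\alpha}$, and $(\cos t)^{1-\alpha}$, respectively. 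For the three definitions indexed by $t>0$ these values are nonzero, and for \eqref{eq009} on $[0,b]$ with $b<\pi/2$ the value $(\cos t)^{1-\alpha}$ is also nonzero, including at $t=0$.

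Next, I would verify that hypothesis (H) holds for each of these $p$: continuity of $p$ and $p_h$ near $h=0$ is immediate from the explicit formulas, and solvability of $p(t,h)=t\pm\varepsilon$ for small $\varepsilon$ follows since each $p$ is a smooth function of $h$ near $h=0$ with nonzero $h$-derivative there, so the implicit function theorem produces a branch $h(t,\varepsilon)\to 0$ as $\varepsilon\to 0$. The same nonvanishing of $p_h(t,0)$ gives $\varepsilon/h(t,\varepsilon)\to p_h(t,0)\neq 0$, which is precisely condition \eqref{eq6x}. With both conditions of Proposition~\ref{th2.5} in force, the biconditional in that proposition yields the equivalence of $\alpha$-differentiability and ordinary differentiability at the point $t$ in question.

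The only point requiring any care will be \eqref{eq009} at the endpoint $t=0$, since there $t^{1-\alpha}$-type weights degenerate while $(\cos t)^{1-\alpha}=1$. Here I would note that $\sin(h)\to 0$ as $h\to 0$ with $\sin(h)/h\to 1$, so the solvability condition in (H) and \eqref{eq6x} remain valid at $t=0$, justifying the parenthetical "resp.\ $t\geq 0$" in the statement. I do not anticipate a genuine obstacle: all the analytic content is packaged in Proposition~\ref{th2.5}, and the proof reduces to the explicit verification of (H) and \eqref{eq6x} for the six elementary functions $p$ listed above.
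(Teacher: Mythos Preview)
Your proposal is correct and follows the paper's intended route: Corollary~\ref{th2.7} is stated without proof precisely because it is immediate from Proposition~\ref{th2.5} once one checks that each of the listed $p$'s satisfies (H) and \eqref{eq6x} with $p_h(t,0)\neq 0$, exactly as you outline. The only slip is cosmetic: you refer to ``six'' functions $p$ at the end, but the corollary concerns only the four definitions \eqref{eq001}, \eqref{eq002}, \eqref{eq003}, and \eqref{eq009}.
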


Stronger versions of a generalized mean value theorem follow in which we do not require the assumptions in Theorem~\ref{th3.3} above but do require that $p_h(t,0)\neq 0$.
\begin{theorem}\label{cor3.6}{\rm [A generalized Mean Value Theorem]}
Let $p$ satisfy the conditions of Proposition~\ref{th2.5} and let $f$ be $p$-differentiable on $(a, b)$ and continuous on $[a, b]$.  Then there exists $c \in (a,b)$ such that $$D_{p}f(c)=\left[\frac{f(b)-f(a)}{b-a}\right]p_{h}(c,0).$$ 
\end{theorem}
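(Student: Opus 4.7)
The plan is to reduce the statement directly to the classical Mean Value Theorem by invoking Proposition~\ref{th2.5}. Since the hypotheses of Proposition~\ref{th2.5} are assumed, we in particular have $p_h(t,0)\neq 0$ for $t\in(a,b)$ together with condition \eqref{eq6x}, and these ensure that $p$-differentiability at a point is equivalent to ordinary differentiability at that point, with the explicit identity $D_p f(t)=p_h(t,0)\,f^{\prime}(t)$ from \eqref{eq5x}.

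First I would use Proposition~\ref{th2.5} to translate the $p$-differentiability hypothesis on $(a,b)$ into ordinary differentiability of $f$ on $(a,b)$. Combined with continuity of $f$ on $[a,b]$, the classical Mean Value Theorem produces a point $c\in(a,b)$ for which
\begin{equation*}
f^{\prime}(c)=\frac{f(b)-f(a)}{b-a}.
\end{equation*}
Next, I would multiply both sides of this equality by $p_h(c,0)$ and invoke \eqref{eq5x} once more to rewrite $p_h(c,0)\,f^{\prime}(c)$ as $D_p f(c)$. This yields
\begin{equation*}
D_p f(c)=p_h(c,0)\,f^{\prime}(c)=\left[\frac{f(b)-f(a)}{b-a}\right]p_h(c,0),
\end{equation*}
which is precisely the desired identity.

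There is really no principal obstacle here; the result is essentially a corollary of Proposition~\ref{th2.5} together with the classical Mean Value Theorem. The only point deserving a brief remark is that the conclusion of Proposition~\ref{th2.5} is invoked pointwise on $(a,b)$, so one need not assume anything stronger than pointwise $p$-differentiability to obtain pointwise ordinary differentiability and thus apply the classical theorem; no global smoothness of $p_h(\cdot,0)$ is required. This is also what makes Theorem~\ref{cor3.6} stronger than Theorem~\ref{th3.5y}, since the monotonicity-type hypotheses \eqref{eq3.7}--\eqref{eq3.8} of Theorem~\ref{th3.3} are bypassed entirely via the reduction to the classical setting.
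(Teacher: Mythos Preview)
Your proof is correct and follows essentially the same route as the paper: invoke Proposition~\ref{th2.5} to pass from $p$-differentiability to ordinary differentiability on $(a,b)$, apply the classical Mean Value Theorem, and then use \eqref{eq5x} at the resulting point $c$. Your additional remark contrasting this with Theorem~\ref{th3.5y} is accurate and a nice observation.
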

\begin{proof} The proof is clear on account of the usual Mean Value Theorem applied to $f$ on $(a, b)$ since $f$ is necessarily differentiable there by Proposition~\ref{th2.5}. Since there exists $c\in (a, b)$ such that $f(b)-f(a) = (b-a)\,f^\prime(c)$ we get $D_{p}f(c)=p_h(c,0)\, f^\prime(c)$
and the result follows.
\end{proof}
\begin{theorem}\label{th2.6y} {\rm [Another generalized Mean Value Theorem]}
Let $p$ satisfy the conditions of Proposition~\ref{th2.5} and let $f, g$ be $p$-differentiable on $(a, b)$, continuous on $[a, b]$, and $D_p(g(t)) \neq 0$ there. Then there exists $c \in (a, b)$ such that
$$\frac{f(b)-f(a)}{g(b)-g(a)} = \frac{D_p f(c)}{D_p g(c)}.$$ 
\end{theorem}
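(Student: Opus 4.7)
The plan is to reduce this to the classical Cauchy mean value theorem via Proposition~\ref{th2.5}, exactly as Theorem~\ref{cor3.6} was reduced to the ordinary mean value theorem. Under the hypotheses of Proposition~\ref{th2.5}, $p$-differentiability on $(a,b)$ is equivalent to ordinary differentiability there, and in fact $D_p f(t) = p_h(t,0)\, f'(t)$ and $D_p g(t) = p_h(t,0)\, g'(t)$ for every $t \in (a,b)$. Thus $f$ and $g$ are ordinarily differentiable on $(a,b)$ and continuous on $[a,b]$, which is exactly the setting for the classical Cauchy mean value theorem.

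First I would use the hypothesis $D_p g(t) \neq 0$ on $(a,b)$. Combined with $D_p g(t) = p_h(t,0)\, g'(t)$, this forces both $p_h(t,0) \neq 0$ and $g'(t) \neq 0$ for every $t \in (a,b)$. In particular $g(a) \neq g(b)$ by Rolle's theorem (applied to the ordinary derivative), so the left-hand side in the claimed identity is well defined.

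Next, I would invoke the classical Cauchy mean value theorem for $f$ and $g$ on $[a,b]$ to produce some $c \in (a,b)$ with
$$\frac{f(b)-f(a)}{g(b)-g(a)} = \frac{f'(c)}{g'(c)}.$$
Finally, multiplying the numerator and denominator of the right-hand side by the nonzero scalar $p_h(c,0)$ and applying \eqref{eq5x} of Proposition~\ref{th2.5} gives
$$\frac{f'(c)}{g'(c)} = \frac{p_h(c,0)\, f'(c)}{p_h(c,0)\, g'(c)} = \frac{D_p f(c)}{D_p g(c)},$$
which yields the claim.

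There is no serious obstacle here; the only subtlety is ensuring $g(b)\neq g(a)$ (handled by Rolle), and verifying that the standing hypothesis $p_h(t,0)\neq 0$ implicit in the condition \eqref{eq6x} of Proposition~\ref{th2.5} is compatible with $D_p g \neq 0$, which I do above. The rest is a one-line cancellation.
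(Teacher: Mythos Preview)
Your argument is correct. It differs from the paper's proof in tactic: the paper stays at the level of $p$-derivatives by defining the auxiliary function
\[
h(t) = f(t)-f(a)-\frac{f(b)-f(a)}{g(b)-g(a)}\,(g(t)-g(a)),
\]
observing $h(a)=h(b)=0$, and then applying its own Theorem~\ref{cor3.6} (the generalized Rolle/MVT) to obtain $D_p h(c)=0$ for some $c\in(a,b)$, from which the identity follows after expanding $D_p h$ via linearity. You instead translate everything to ordinary derivatives through Proposition~\ref{th2.5}, invoke the classical Cauchy mean value theorem as a black box, and then translate back by multiplying through by $p_h(c,0)$. Both routes are short and ultimately rest on the same classical fact; the paper's version has the minor virtue of reusing its own machinery, while yours has the minor virtue of explicitly checking that $g(b)\neq g(a)$ via Rolle (the paper silently assumes this when writing down $h$).
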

\begin{proof} Write $$h(t) = f(t)-f(a)-\left[\frac{f(b)-f(a)}{g(b)-g(a)}\right](g(t)-g(a)).$$ then $h(a)=h(b)=0$ and $h$ satisfies the conditions of Theorem~\ref{cor3.6}. So, there exists $c \in (a, b)$ such that $D_p h(c) = 0$. However,
$$D_p h(c) = D_p f(c) - \left[\frac{f(b)-f(a)}{g(b)-g(a)}\right] D_p g(c).$$ The result follows.
\end{proof}
\begin{remark}
Specializing to the case where $$p(t, h) = t+\frac{\Gamma(\beta)\,  h t^{1-\alpha}}{\Gamma(\beta-\alpha+1)}$$ and $g(t) = t^\alpha/\Gamma(\alpha)$ with $\alpha \in (0, 1)$ we get [\cite{as}, Theorem 6]. The choice $g(t) =t^\alpha/\alpha$ gives [\cite{kat}, Theorem 2.9].

Next, if $p_h(t,0)$ exists everywhere on $(a, b)$ and $p(t,0)=0$, then Theorem~\ref{th3.5y} gives us that $k$ in \eqref{mvtc} is given by $k = p_h(c,0)$. In this case, we note that the function $f$ need not to be differentiable in the usual sense here (see Theorem~\ref{th2.5}) and $p_{h}(c,0)$ may or may not be zero.
\begin{exam}
Given $I=[a,b]$, $f(t)=|t|$ and $p(t,h)=t+th+t^{3}h^{3}$. Then $p$ satisfies the conditions of Theorem~\ref{weakmvt}. Furthermore, $p_{h}(t,0) \neq 0$  for $t \neq 0$. A simple calculation shows that $D_{p}f(t)=|t|=f(t)$, for all $t \in (a,b)$. By Theorem~\ref{weakmvt}, there exists $c \in (a,b)$ that $$D_{p}f(c)=\left[\frac{f(b)-f(a)}{b-a}\right]p_{h}(c,0),$$ i.e., $$|c|=\left[\frac{f(b)-f(a)}{b-a}\right]c.$$ 
 The existence of $c$ can be calculated directly as follows.  Let $a<b<0$. Then,  $$|c|=\left[\frac {-b+a}{b-a}\right]c,$$ so that $|c|=-c$. So, we may choose any $c$ such that $ a < c < b$. Let $0<a<b$. In this case, $$|c|=\left[\frac{b-a}{b-a}\right]c.$$ It suffices to choose $c$ such that $a < c <b$ again.  Finally, let $a<0<b$. As $$|c|=\left[\frac{b+a}{b-a}\right]c,$$ it suffices to choose $c=0$.
\end{exam}
\begin{remark}
In this example, $p_{h}(0,0)=0$ and consequently $D_{p}f(0)=0$ even though $f^{\prime}(0)$ does not exist.
\end{remark}

Of course, Rolle's theorem is obtained by setting $f(a)=f(b)=0$ in Theorem~\ref{th2.6y}. The latter then includes [\cite{kat}, Theorem 2.8].
\end{remark}

\begin{definition}\label{defx1}
Let $p$ satisfy (H), and let $f : [a,b] \to \mathbb{R}$ be continuous. Then $$I_{p}(f)(t)=\int_a^{t}\frac{f(x)}{p_{h}(x,0)}\, dx.$$
\end{definition}

This definition includes the fractional integral considered in [\cite{kat}, Definition 3.1]. Observe that, since $1/p_h \in L(a, b)$ and $f$ is bounded, this integral always exists (absolutely). It follows that  $I_{p}(f) \in AC[a,b]$ and consequently $I^{\prime}_{p}(f)$ exists a.e. In this case, the continuity of $p_h$ guarantees that $I_{p}(f) \in C^1(a, b)$.

Next we state and prove a version of the generalized fundamental theorem of calculus for such $p$-derivatives.
The first part is clear, i.e., 
\begin{theorem}\label{th3.5}
Let $p$ satisfy (H), and let  $f : [a,b] \to \mathbb{R}$ be continuous. Then $D_{p}(I_{p}(f)(t))=f(t).$
\end{theorem}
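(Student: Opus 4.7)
The plan is to reduce the statement to the classical Fundamental Theorem of Calculus by passing through Proposition~\ref{th2.5}. In the context of Section~4 the function $p$ satisfies $(H)$ together with $p_h(t,0)\neq 0$, so the integrand $f(x)/p_h(x,0)$ defining $I_p(f)$ is continuous on $[a,b]$; in particular, as already noted just after Definition~\ref{defx1}, $I_p(f)\in C^1(a,b)$.

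First I would apply the ordinary Fundamental Theorem of Calculus to the continuous integrand $f(x)/p_h(x,0)$, obtaining that $I_p(f)$ is differentiable in the usual sense at every $t\in(a,b)$ with
$$(I_p(f))'(t)=\frac{f(t)}{p_h(t,0)}.$$

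Second, I would invoke Proposition~\ref{th2.5} to convert this ordinary derivative into the $p$-derivative. Hypothesis $(H)$ is in force, and the auxiliary condition \eqref{eq6x} is automatic because $p_h(t,0)\neq 0$: solving $p(t,h)=t+\varepsilon$ via the implicit function theorem gives $h(t,\varepsilon)=\varepsilon/p_h(t,0)+o(\varepsilon)$, so $\varepsilon/h(t,\varepsilon)\to p_h(t,0)\neq 0$. Proposition~\ref{th2.5} then yields
$$D_p(I_p(f))(t)=p_h(t,0)\,(I_p(f))'(t)=p_h(t,0)\cdot\frac{f(t)}{p_h(t,0)}=f(t),$$
which is the desired conclusion.

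The main obstacle is really only bookkeeping about hypotheses, namely verifying that Proposition~\ref{th2.5} applies; once that is done, no delicate limit analysis is required because the identity $D_pg=p_h(\cdot,0)\,g'$ does all the work. Should one prefer a direct calculation instead, one can compute from the definition
$$D_p(I_p(f))(t)=\lim_{h\to 0}\frac{1}{h}\int_t^{p(t,h)}\frac{f(x)}{p_h(x,0)}\,dx,$$
use continuity of the integrand at $t$ together with $p(t,0)=t$ (which follows from setting $\varepsilon=0$ in $(H)$) to approximate the integral by $(p(t,h)-t)\,f(t)/p_h(t,0)+o(p(t,h)-t)$, and then pass to the limit via $\lim_{h\to 0}(p(t,h)-t)/h=p_h(t,0)$ to reach the same conclusion. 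The potential trap in this direct route is ensuring the $o(\cdot)$ term is negligible uniformly as $h\to 0$, which is precisely where continuity of $f/p_h(\cdot,0)$ at $t$ is used.
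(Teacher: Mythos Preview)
Your proof is correct and follows essentially the same route as the paper: apply the classical Fundamental Theorem of Calculus to obtain $(I_p(f))'(t)=f(t)/p_h(t,0)$, then use Proposition~\ref{th2.5} to convert to $D_p(I_p(f))(t)=p_h(t,0)(I_p(f))'(t)=f(t)$. Your additional verification of the hypotheses of Proposition~\ref{th2.5} and the alternative direct-limit computation are more than the paper provides, but the core argument is identical.
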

\begin{proof} By Proposition~\ref{th2.5}, since $I_p(f)$ is differentiable, we have,
\begin{eqnarray*}
D_{p}(I_{p}(f)(t)) &=& p_{h}(t,0)I^{\prime}_{p}(f)(t) =  f(t).
\end{eqnarray*}
\end{proof}
{\bf NOTE:} The preceding includes [\cite{kat}. Theorem 3.2] as a special case.
\begin{theorem}\label{th3.8}
Let $p$ satisfy (H) and let $F : [a,b] \to \mathbb{R}$ be continuous. If $F$ is $p$-differentiable on $(a, b)$ and $D_{p}F$ is continuous on $[a, b]$, then $I_{p}(D_{p}F)(b)=F(b)-F(a).$
\end{theorem}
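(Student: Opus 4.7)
The plan is to reduce the claim to the classical Fundamental Theorem of Calculus by passing from the $p$-derivative to the ordinary derivative. Under Hypothesis (H), and given the tacit assumption $1/p_{h}(\cdot,0)\in L(I)$ that accompanies Definition~\ref{defx1} (so in particular $p_h(t,0)\neq 0$ a.e.), Proposition~\ref{th2.5} applies on $(a,b)$: since $F$ is $p$-differentiable on $(a,b)$, it is also differentiable there in the ordinary sense and
$$ D_p F(t) = p_h(t,0)\, F^\prime(t), \qquad t\in (a,b),$$
so that $F^\prime(t) = D_p F(t)/p_h(t,0)$ on $(a,b)$.

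With this identity in hand, applying Definition~\ref{defx1} directly yields
$$ I_p(D_p F)(b) = \int_a^b \frac{D_p F(x)}{p_h(x,0)}\, dx = \int_a^b F^\prime(x)\, dx,$$
and it only remains to evaluate the last integral. The hypotheses of the classical FTC are in place: $F$ is continuous on $[a,b]$, differentiable on $(a,b)$, and $F^\prime = D_p F/p_h(\cdot,0)$ is integrable on $[a,b]$ since $D_p F$ is continuous (hence bounded) on $[a,b]$ while $1/p_h(\cdot,0)\in L(a,b)$ by assumption. Therefore $\int_a^b F^\prime(x)\, dx = F(b)-F(a)$, giving the stated identity.

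The main obstacle I anticipate is rigorously justifying the last step, namely that the Lebesgue FTC is applicable. Continuity of $F$ on $[a,b]$ together with pointwise existence of $F^\prime$ on $(a,b)$ and integrability of $F^\prime$ is not in general sufficient (the Cantor function is the standard counterexample), so one must argue that $F$ is actually absolutely continuous on $[a,b]$. In the situations arising from the standard examples \eqref{eq001}--\eqref{eq009} the function $p_h(\cdot,0)$ is continuous and bounded away from zero on the interval in question, in which case $F^\prime$ is continuous on $[a,b]$ and the Riemann version of the FTC closes the argument immediately. In the more general setting, one combines everywhere-existence of $F^\prime$ on $(a,b)$ (from Proposition~\ref{th2.5}) with $F^\prime \in L^1(a,b)$ via a standard Vitali-type argument to conclude $F\in AC[a,b]$, after which the Lebesgue FTC gives $\int_a^b F^\prime = F(b)-F(a)$ and hence the theorem.
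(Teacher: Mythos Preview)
Your argument is correct but proceeds along a genuinely different line from the paper. You convert the $p$-derivative into the ordinary derivative via Proposition~\ref{th2.5}, obtaining $F'(t)=D_pF(t)/p_h(t,0)$, and then invoke the classical Fundamental Theorem of Calculus directly on $\int_a^b F'$. The paper instead mimics the standard Riemann-sum proof of the FTC: it partitions $[a,b]$, applies the generalized Mean Value Theorem (Theorem~\ref{cor3.6}) on each subinterval to write $F(x_i)-F(x_{i-1})=\dfrac{D_pF(t_i)}{p_h(t_i,0)}\,\Delta x_i$, telescopes, and passes to the limit using the continuity of $D_pF$ (and implicitly of $p_h(\cdot,0)$). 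Both routes rest on the same extra hypothesis of Proposition~\ref{th2.5} (namely $p_h(t,0)\neq 0$), since Theorem~\ref{cor3.6} itself requires it; the theorem sits in the section where this is standing. Your approach is shorter and makes the reduction to the classical result explicit, while the paper's partition argument keeps the development self-contained within the $p$-derivative calculus just built. Your caution about absolute continuity is well placed in the fully general measurable setting, but under the continuity of $p_h(\cdot,0)$ already assumed in (H) and used by the paper, $F'$ is continuous on $(a,b)$ and the Riemann FTC suffices without the Vitali-type detour.
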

\begin{proof}
Let $a=x_{0}<x_{1}<x_{2}<....<x_{n}=b$ be a partition of $[a,b]$. Applying corollary~\ref{cor3.6} to each $[x_{i-1}, x_{i}]$ we get, for some $t_i$, $$D_{p}F(t_{i})=\frac{F(x_{i})-F(x_{i-1})}{x_{i}-x_{i-1}}p_{h}(t_{i},0)$$ or $$F(x_{i})-F(x_{i-1})=(x_{i}-x_{i-1})\frac{D_{p}F(t_{i})}{p_{h}(t_{i},0)}.$$ Thus,  
\begin{eqnarray*}
F(b)-F(a) &=& \sum _{i=1}^n F(x_{i})-F(x_{i-1})= \sum _{i=1}^n \frac{D_{p}F(t_{i})}{p_{h}(t_{i},0)} \Delta x_{i}.
\end{eqnarray*}
Now since $f$ is continuous on every subinterval $[x_{i-1},x_{i}]$ of $[a, b]$, we can pass to the limit as $\Delta x_i \to 0$. This gives, $$F(b)-F(a)=\int_a^{b}\frac{D_{p}F(t)}{p_{h}(t,0)}\, dt.$$
 This shows that $I_{p}(D_{p}F)(b)=F(b)-F(a)$ and we are done.
\end{proof}
Combining Theorem~\ref{th3.5} and Theorem~\ref{th3.8} we get the generalized Fundamental Theorem of Calculus. In addition, using the above relation, we can get a Generalized Integration by Parts formula, i.e.,
\begin{cor} If $f, g$ are both $p$-differentiable on $(a, b)$ and continuous on $[a, b]$, then,
$$I_{p}(f D_{p}(g))=fg-I_{p}(D_{p}(f)g).$$ 
\end{cor}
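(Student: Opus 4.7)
The plan is to derive this identity in two short steps, treating it as an immediate consequence of the product rule (Proposition~\ref{th2.2}(b)) and the generalized Fundamental Theorem of Calculus (Theorem~\ref{th3.8}). Since both ingredients are already in place, the argument is essentially a single manipulation; I read the formula in the statement as the evaluation at $t=b$, with the symbol $fg$ standing for $f(b)g(b)-f(a)g(a)$.

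First, I would apply Proposition~\ref{th2.2}(b) to the product $fg$ on $(a,b)$. Since $f$ and $g$ are both $p$-differentiable there, the product $fg$ is $p$-differentiable with
$$D_{p}(fg)(t) \;=\; f(t)\,D_{p}g(t) + g(t)\,D_{p}f(t).$$
Dividing by $p_{h}(\cdot,0)$ and integrating from $a$ to $b$ against $dx$ yields, by linearity of the Lebesgue integral defining $I_{p}$,
$$I_{p}\bigl(fD_{p}g\bigr)(b) + I_{p}\bigl(D_{p}(f)g\bigr)(b) \;=\; I_{p}\bigl(D_{p}(fg)\bigr)(b).$$

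Second, I would verify that Theorem~\ref{th3.8} applies to $F=fg$. Continuity of $fg$ on $[a,b]$ is automatic; $p$-differentiability on $(a,b)$ follows from the product rule just invoked; and $D_{p}(fg)=fD_{p}g+gD_{p}f$ is continuous on $[a,b]$ as a sum of products of continuous functions, provided one reads the corollary's hypotheses as including continuity of $D_{p}f$ and $D_{p}g$ up to the endpoints (which is in any case needed for $I_{p}(fD_{p}g)$ and $I_{p}(D_{p}(f)g)$ to be well defined in the present framework). Theorem~\ref{th3.8} then gives
$$I_{p}\bigl(D_{p}(fg)\bigr)(b) \;=\; f(b)g(b) - f(a)g(a).$$
Substituting this into the identity above and rearranging yields the claimed formula.

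There is no real obstacle: the product rule and the FTC do all the work, and the only technical comment is the continuity of $D_{p}(fg)$ on $[a,b]$, which is forced by the ingredients entering the statement. One could equivalently present the proof by appealing to Proposition~\ref{th2.5} to rewrite $D_{p}(fg)/p_{h}(\cdot,0) = (fg)'$ and invoking the classical integration by parts on $[a,b]$, but the route through Theorem~\ref{th3.8} keeps the argument entirely within the generalized calculus already developed.
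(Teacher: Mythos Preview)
Your proposal is correct and follows exactly the approach the paper has in mind: the paper's one-line proof (``This is clear on account of the Product Rule in Proposition~\ref{th2.2}(b) above'') together with the preceding remark ``using the above relation'' (i.e., Theorem~\ref{th3.8}) is precisely your two-step argument of applying the product rule and then the generalized Fundamental Theorem of Calculus. Your observation that the continuity of $D_{p}f$ and $D_{p}g$ on $[a,b]$ is tacitly needed for Theorem~\ref{th3.8} to apply (and for the integrals to be well defined) is a fair refinement that the paper glosses over.
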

This is clear on account of the Product Rule in Proposition~\ref{th2.2}(b) above. 

\section{The case where $p_h(t,0)=0$.}

In this section we consider the {\it exceptional case}
\begin{equation}\label{eqq04}
 p_h(t,0) =0.
\end{equation}
The effect of \eqref{eqq04} is that it tends to {\it smooth out} discontinuities in the ordinary derivative of functions. A glance at \eqref{eqq03} would lead one to guess that whenever \eqref{eqq04} holds we have $D_pf(t) =0$ but that is not the case, in general. 
\begin{exam}\label{ex5.1x} Consider the special case $p(t,h) = t+h^2$ which satisfies \eqref{eqq04}. Then the function $f(x) = \sqrt{x}$, $x>0$, although not differentiable at $x=0$, is clearly right-$p$-differentiable at $x=0$ with  $D_p^+f(0)=1$. 
\end{exam}
\begin{theorem}\label{th5.1xx}
Let \eqref{eqq04} hold, $p(t, 0)=t$, and assume that \eqref{eq3.7}, \eqref{eq3.8} are satisfied for each $t$, as well. If $f$ is continuous on $[a, b]$ and $D_p f(t)$ exists, then $D_p f(t)=0$.
\end{theorem}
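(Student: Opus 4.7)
My plan is to factor the difference quotient as
\[
\frac{f(p(t,h))-f(t)}{h} \;=\; \frac{f(p(t,h))-f(t)}{p(t,h)-t}\cdot\frac{p(t,h)-t}{h},
\]
which is legitimate for small nonzero $h$ since the sign conditions \eqref{eq3.7}, \eqref{eq3.8} prevent $p(t,h)=t$ for $h\neq 0$. Using $p(t,0)=t$, the second factor is exactly the difference quotient $[p(t,h)-p(t,0)]/h$, so it converges to $p_{h}(t,0)=0$ by \eqref{eqq04}.

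Next I would change variables via $\varepsilon=p(t,h)-t$: the sign conditions give $\sgn(\varepsilon)=\sgn(h)$, and Hypothesis (H) provides a bona fide correspondence between small one-sided neighborhoods of $0$ in $h$ and in $\varepsilon$. The first factor then becomes the ordinary two-sided difference quotient $[f(t+\varepsilon)-f(t)]/\varepsilon$, now indexed by $\varepsilon\to 0$. In the favourable case where $f$ happens to be ordinarily differentiable at $t$, the conclusion is immediate: the first factor tends to $f'(t)$, the product tends to $p_{h}(t,0)\,f'(t)=0\cdot f'(t)=0$, and so $D_{p}f(t)=0$. This is morally the content of Proposition~\ref{th2.5}, but adapted to the degenerate case $p_h(t,0)=0$.

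The main obstacle is the case where $f'(t)$ fails to exist. Then the first factor may blow up while the second vanishes, giving an indeterminate $0\cdot\infty$ that the factorisation alone cannot resolve. To rule this out I would use the full strength of the hypothesis that $D_{p}f(t)$ is a \emph{finite, two-sided} limit: arguing by contradiction, if the first factor were unbounded along a sequence $h_{n}\to 0^{+}$, I would try to match it against the behaviour along a sequence $h_{n}'\to 0^{-}$ produced by the sign conditions and (H), and show that the sign-matched blow-ups on the two sides cannot both be compatible with a finite two-sided limit of the product. Making this matching rigorous is the delicate step—Example~\ref{ex5.1x} shows that one-sided $p$-derivatives can be nonzero even under \eqref{eqq04}, so the two-sidedness of the hypothesis has to be used essentially and cannot be weakened to one-sided existence.
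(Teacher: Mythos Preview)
Your factorisation and the differentiable case are fine, but step~3 is not merely ``delicate''---it cannot be completed as proposed. Two-sidedness of the $p$-derivative does not rule out the $0\cdot\infty$ indeterminacy: take $p(t,h)=t+h^{3}$, which satisfies $p(t,0)=t$, $p_{h}(t,0)=0$, and both sign conditions \eqref{eq3.7}--\eqref{eq3.8} (since $\sgn(h^{3})=\sgn(h)$). For $f(x)=x^{1/3}$ at $t=0$ your first factor is $h/h^{3}=h^{-2}\to+\infty$ on \emph{both} sides, your second factor is $h^{3}/h=h^{2}\to 0$, and the product is identically $1$. Thus $D_{p}f(0)=1$ exists as a genuine two-sided limit and is nonzero; no sign-matching of one-sided blow-ups produces a contradiction, because the blow-ups are perfectly symmetric. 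In fact $D_{p}f(t)$ exists for every real $t$ (it equals $0$ for $t\neq 0$), so even the reading ``$D_{p}f$ exists everywhere $\Rightarrow$ $D_{p}f\equiv 0$'' is refuted by this example.

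The paper does not attempt a pointwise argument at all. It reads the hypothesis globally---$D_{p}f$ existing throughout $(a,b)$---and invokes the generalised Mean Value Theorem (Theorems~\ref{weakmvt} and~\ref{th3.5y}): the auxiliary function from that proof is continuous on $[a,b]$, attains an extremum at some interior $c$, and there $D_{p}f(c)=\frac{f(b)-f(a)}{b-a}\,p_{h}(c,0)=0$ since $p_{h}(\cdot,0)\equiv 0$. That is a global argument producing a single point $c$ with $D_{p}f(c)=0$; your local factorisation is aiming at a strictly stronger pointwise conclusion, which the example above shows is unattainable.
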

\begin{proof}
Note that $f$ is continuous on $(a, b)$ on account of the hypothesis and Theorem~\ref{th1x}. Using the proofs of Theorem~\ref{weakmvt} and Theorem~\ref{th3.5y} we observe that the function $h$ defined there is continuous on $[a, b]$, as $f$ is continuous there, and therefore its maximum value is attained at $x=c$. Thus $k=p_h(c,0)=0$ by \eqref{mvtc}.
\end{proof}
Of course, the previous example had an ordinary derivative with an infinite discontinuity at $x=0$ but still simple discontinuities in the ordinary derivative can lead to the existence of their $p$-derivative for certain $p$. 
\begin{remark}
Incidentally, Example~\ref{ex5.1x}  also shows that \eqref{eq3.7} cannot be waived in the statement of Theorem~\ref{th5.1xx}.
\end{remark}
\begin{exam}
As before we let $p(t,h) = t+h^2$. Then $f(x) = |x|$, and although it is not differentiable at $x=0$,  it is clearly $p$-differentiable at $x=0$ with $D_p f(0)=0$.
\end{exam}

Below we study the consequences of this extraordinary assumption \eqref{eqq04} and its impact on the study of such $p$-derivatives.

\subsection{Consequences of \eqref{eqq04}}

We have seen that the notion of $p$-differentiability can be used to turn non-differentiable functions into $p$-differentiable ones, for some exceptional $p$'s and these can have a p-derivative equal to zero, as well. We first look at some simple special cases of $p$ satisfying \eqref{eqq04}. 

As is usual we define a {\it polygonal function} as a function whose planar graph is composed of line segments only, i.e., it is piecewise linear.
\begin{theorem}\label{th2.22}
Let $p(t,h) = t+h^2$. Then every polygonal function $f$ on $\R$ is $p$-differentiable everywhere and $D_p f(x)=0$ for all $x \in \R$.
\end{theorem}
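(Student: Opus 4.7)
My plan is to verify the claim by a direct computation from the definition \eqref{eq005}, since Proposition~\ref{th2.5} does not apply to this $p$. Indeed, solving $p(t,h)=t+\varepsilon$ gives $h(t,\varepsilon)=\pm\sqrt{\varepsilon}$, so $\varepsilon/h(t,\varepsilon)=\pm\sqrt{\varepsilon}\to 0$, violating \eqref{eq6x}. Moreover, $p_h(t,0)=0$, so even the formula $D_pf(t)=p_h(t,0)f'(t)$ would (as the preceding example warns) not automatically give the answer.

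The key observation is that for this particular $p$, the increment $p(t,h)-t=h^{2}$ is \emph{always non-negative}, so the definition only probes $f$ to the right of $t$. I would therefore fix an arbitrary $t\in\R$ and use the polygonal structure to pick $\delta>0$ such that $f$ is linear on the half-open interval $[t,t+\delta)$, with slope equal to the right derivative $m:=f'_{+}(t)$ (which exists and is finite because $f$ is piecewise linear; this is where the corner case $t$ = vertex is handled, since a polygonal function still has one-sided derivatives everywhere). For $|h|<\sqrt{\delta}$ we then have $t+h^{2}\in[t,t+\delta)$ and hence
\begin{equation*}
\frac{f(t+h^{2})-f(t)}{h}=\frac{m\cdot h^{2}}{h}=m\,h.
\end{equation*}
Letting $h\to 0$ gives $D_{p}f(t)=0$, as required.

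The main obstacle, such as it is, is not technical but conceptual: one must resist the temptation to invoke Proposition~\ref{th2.5} (and its formula \eqref{eq5x}) and instead argue from the definition, using that $h^{2}\geq 0$ so that only the right-hand behaviour of $f$ at $t$ matters. Once that is recognised, the piecewise linearity trivialises the numerator to $m h^{2}$, the $1/h$ factor reduces this to $m h$, and the limit is $0$ uniformly over the choice of $t$, including at vertices of the graph where $f$ fails to be differentiable in the usual sense.
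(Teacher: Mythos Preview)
Your proof is correct and follows essentially the same route as the paper: a direct computation from the definition using the local linearity of $f$ to the right of $t$. The paper separates the vertex and non-vertex cases (invoking the absolute-value example and Example~3.1 of \cite{JPA}), whereas your observation that $p(t,h)-t=h^{2}\geq 0$ lets you handle both cases at once via the right derivative $f'_{+}(t)$; otherwise the arguments coincide.
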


\begin{proof}
Since the graph of every polygonal function consists of an at most countable and discrete set of simple discontinuities in its ordinary derivative, it is easy to show that its $p$-derivative at the cusp points must be zero (just like the absolute value function above). The curve being linear elsewhere it is easy to see that at all such points its $p$-derivative exists and must be zero (see Example 3.1 in \cite{JPA}).
\end{proof}

\begin{remark}
In contrast with the case where $p_h(t,0)\neq0$ where an integral can be defined via Definition~\ref{defx1}, in this case such an inverse cannot be constructed, in general, as the preceding example shows. 
\end{remark}

There must be limitations to this theory of generalized or $p$-derivatives. Thus, we investigate the non-existence of $p$-derivatives under condition \eqref{eqq04} for a class of power functions defining the derivative. Our main result is Theorem~\ref{th2.2} below which states that for power-like $p$-functions there are functions that are nowhere $p$-differentiable on the real line. In the event that $p_h(t,0)\neq 0$, Proposition~\ref{th2.5} makes it easy to construct functions that are nowhere $p$-differentiable on the whole real axis simply by choosing, in particular, any function with $p_h(t,0)=1$. For a fascinating historical survey of classical nowhere differentiable functions, the reader is encouraged to look at \cite{jt} (available online).

At this point one may think that $p$-differentiability is normal and that most functions have a zero $p$-derivative if $p_h(t, 0)=0$. This motivates the next question. {\it Does there exist a function $p$ satisfying \eqref{eqq04} such that it is continuous and nowhere $p$-differentiable on $\R$?} The answer is yes and is in the next theorem. 

\subsection{Weierstrass' continuous, nowhere differentiable function}

In this subsection we show that the series \eqref{kw},  first considered by Weierstrass, and one that led to a continuous nowhere differentiable function, \cite{wr}, can also serve as the basis for a continuous nowhere $p$-differentiable function for a large class of functions $p$ satisfying \eqref{eqq04}, namely power functions. Below we show that for each $\alpha > 1$ there is a function $p$ satisfying \eqref{eqq04} and a function $f$ that is nowhere $p$-differentiable on the whole line.

\begin{theorem}
Let $p(t,h) = t+h^\alpha$ where $\alpha > 1$. Then Weierstrass' continuous and nowhere differentiable function 
\begin{equation}\label{kw}
f(x) = \sum_{n=0}^\infty \, b^n\,\cos (a^n\pi x)
\end{equation}
 where $0 < b < 1$, $a$ is a positive integer, and 
\begin{equation}\label{aaa}
\sqrt[\alpha]{a}\,b > 1 + \frac32\,\pi,
\end{equation}
is nowhere $p$-differentiable on $\R$.
\end{theorem}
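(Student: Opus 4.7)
The plan is to mirror the classical Weierstrass non-differentiability proof, with the substitution $\eta_m=h_m^\alpha$ translating estimates on the ordinary difference quotient $[f(x+\eta_m)-f(x)]/\eta_m$ into estimates on the $p$-difference quotient $[f(x+h_m^\alpha)-f(x)]/h_m$. Fix an arbitrary $x\in\R$; the goal is to exhibit a sequence $h_m\to 0^+$ along which the $p$-difference quotient is unbounded, which already precludes the existence of $D_p f(x)$.

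First I would make the standard Weierstrass choice: pick $\alpha_m\in\mathbf{Z}$ with $a^m x=\alpha_m+x_m$, $|x_m|\leq 1/2$, and set $\eta_m=(1-x_m)/a^m$, so that $\eta_m\in[\tfrac{1}{2a^m},\tfrac{3}{2a^m}]$ and $a^m(x+\eta_m)=\alpha_m+1\in\mathbf{Z}$. Put $h_m=\eta_m^{1/\alpha}>0$, so $h_m\to 0$ and $\eta_m=h_m^\alpha$. Split $f = S_m + R_m$, where $S_m(y)=\sum_{n=0}^{m-1}b^n\cos(a^n\pi y)$, and estimate the two pieces separately.

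For the partial sum, apply $\cos A-\cos B=-2\sin(\tfrac{A+B}{2})\sin(\tfrac{A-B}{2})$ with $|\sin\theta|\leq|\theta|$ to get the standard Lipschitz bound $|S_m(x+\eta_m)-S_m(x)|\leq\pi\eta_m(ab)^m/(ab-1)$; dividing by $h_m=\eta_m^{1/\alpha}$ and using $\eta_m\leq \tfrac{3}{2a^m}$ converts this into a bound of the form $C_1\pi(\sqrt[\alpha]{a}\,b)^m/(ab-1)$ with $C_1=(3/2)^{1-1/\alpha}$. For the tail, integrality of $a^n(x+\eta_m)$ for $n\geq m$ (using that $a$ is a positive odd integer) collapses each cosine to $\pm 1$, while $\cos(a^n\pi x)=(-1)^{\alpha_m}\cos(a^{n-m}\pi x_m)$, giving
\[
R_m(x+\eta_m)-R_m(x)=-(-1)^{\alpha_m}b^m\sum_{k=0}^\infty b^k\bigl[1+\cos(\pi a^k x_m)\bigr],
\]
in which every summand is nonnegative and the $k=0$ term is at least $1$; dividing by $h_m\leq(3/(2a^m))^{1/\alpha}$ yields $|R_m(x+\eta_m)-R_m(x)|/h_m\geq C_2(\sqrt[\alpha]{a}\,b)^m$ with $C_2=(2/3)^{1/\alpha}$.

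Combining the two estimates,
\[
\left|\frac{f(x+h_m^\alpha)-f(x)}{h_m}\right|\;\geq\;(\sqrt[\alpha]{a}\,b)^m\Bigl[\,C_2-\tfrac{C_1\pi}{ab-1}\,\Bigr],
\]
and the $(3/2)$-factors in $C_1C_2$ multiply to $3/2$, so the bracket has the same sign as $ab-1-\tfrac{3\pi}{2}$. The hypothesis $\sqrt[\alpha]{a}\,b>1+\tfrac{3\pi}{2}$ forces $ab\geq\sqrt[\alpha]{a}\,b>1+\tfrac{3\pi}{2}$ (since $a\geq a^{1/\alpha}$ when $a\geq 1$ and $\alpha>1$) and $\sqrt[\alpha]{a}\,b>1$, so the bracket is strictly positive while $(\sqrt[\alpha]{a}\,b)^m\to\infty$. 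Hence the $p$-difference quotient diverges along $h_m$, so $D_pf(x)$ cannot exist; as $x$ was arbitrary, $f$ is nowhere $p$-differentiable on $\R$. The main delicate step I anticipate is the constant bookkeeping that forces the bracket to be positive precisely under the stated hypothesis; a secondary subtlety is checking that $h_m^\alpha$ is unambiguously defined (we take $h_m>0$), which simultaneously shows that divergence along this one-sided sequence already destroys the two-sided limit required by the definition of $D_p$.
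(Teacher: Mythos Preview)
Your proposal is correct and follows essentially the same route as the paper's proof: the same Weierstrass choice $a^m x=\alpha_m+x_m$, $\eta_m=(1-x_m)/a^m$, $h_m=\eta_m^{1/\alpha}$, the same $S_m/R_m$ split, and the same final inequality with constants $C_1=(3/2)^{(\alpha-1)/\alpha}$ and $C_2=(2/3)^{1/\alpha}$. The only cosmetic differences are that you use the sum-to-product identity where the paper invokes the Mean Value Theorem for the $S_m$ bound, and your phrase ``the $(3/2)$-factors in $C_1C_2$ multiply to $3/2$'' should read $C_1/C_2=3/2$; the conclusion you draw from it is the correct one.
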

\begin{proof}
Observe that the cases where $ \alpha \leq1$ are excluded by \eqref{eqq04}, so we let $\alpha > 1$. We will show, as usual,  that there exists a sequence of $h \to 0$ along which $|(f(x+h^\alpha)-f(x))/h| \to \infty$. Now, for fixed $x \in \R$, 
\begin{eqnarray*}
\frac{f(x+h^\alpha) - f(x)}{h} &=& \sum_{n=0}^\infty b^n\, \frac{\cos (a^n\pi (x+h^\alpha)) - \cos (a^n\pi x)}{h}\\
&=&\sum_{n=0}^{m-1} b^n\, \frac{\cos (a^n\pi (x+h^\alpha)) - \cos (a^n\pi x)}{h} + \sum_{n=m}^\infty b^n\, \frac{\cos (a^n\pi (x+h^\alpha)) - \cos (a^n\pi x)}{h}\\
&:=& S_m + R_m.
\end{eqnarray*}
Estimating $S_m$ by the Mean Value Theorem shows that for some $0< \theta< 1$,
\begin{equation}\label{eqq05}
| \cos (a^n\pi (x+h^\alpha)) - \cos (a^n\pi x) | = |h^\alpha a^n \pi \sin (a^n\pi (x+\theta h^\alpha))| \leq a^n\pi |h|^\alpha,
\end{equation}
so that \begin{equation}\label{eqq06}|S_m | \leq \pi |h|^{\alpha -1}\,\sum_{n=0}^{m-1} (ab)^n  < \frac{\pi\, |h|^{\alpha -1} (ab)^m}{ab-1}.\end{equation}
Recall that $x$ is fixed at the outset. Now, for any positive integer $m$, we can write $a^mx $ in the form $a^m x = \alpha_m + t_m$ where $\alpha_m$ is an integer and $|t_m| \leq 1/2$. Define a sequence $h_m > 0$ by 
$$h_m = \sqrt[\alpha]{\frac{1-t_m}{a^m}}.$$
Then $0 < h_m^\alpha \leq 3/(2a^m).$ From this choice of a sequence and \eqref{eqq06} we get the estimate,

\begin{equation}\label{eqq07} 
|S_m| <  \pi\, \left (\frac{1-t_m}{a^m} \right )^{\frac{\alpha-1}{\alpha}}\, \frac{(ab)^m} {ab-1}
\leq \pi\, \left (\frac{3}{2a^m} \right )^{\frac{\alpha-1}{\alpha}}\, \frac{(ab)^m} {ab-1}
=  \pi\, \left (\frac{3}{2} \right )^{\frac{\alpha-1}{\alpha}} \frac{ {a^{\frac{m}{\alpha}}}b^m }{ab-1}.
\end{equation}

The next step is to show that the remainder term, $R_m$, remains bounded away from $0$. To this end note that $a^n\, \pi\,(x+h_m^\alpha) = a^{n-m} a^m\pi\, (x+h_m^\alpha) = a^{n-m}\pi (\alpha_m + 1).$ It follows that since $a$ is odd, then for $n \geq m$, we have 
\begin{equation}\label{eqq08}
\cos (a^n\, \pi\,(x+h_m^\alpha)) = (-1)^{\alpha_m+1}.
\end{equation}
A similar calculation shows that 
\begin{equation}\label{eqq09a}
\cos (a^n\, \pi\, x) = \cos (a^{n-m}\, \pi\, (\alpha_m + t_m)) = (-1)^{\alpha_m} \cos(a^{n-m}\pi t_m).
\end{equation}
Combining \eqref{eqq08} and \eqref{eqq09a} we see that
\begin{eqnarray*}
 R_m &=& \sum_{n=m}^\infty b^n \frac{(-1)^{\alpha_m +1} - (-1)^{\alpha_m}\cos (a^{n-m}\pi t_m)}{h_m}\\
&=& \frac{(-1)^{\alpha_m +1}}{h_m}\sum_{n=m}^\infty b^n (1 + \cos (a^{n-m}\pi t_m))\\
{\rm i.e., }\ |R_m| &= & \frac{1}{|h_m|}\sum_{n=m}^\infty b^n (1 + \cos (a^{n-m}\pi t_m)).\\
\end{eqnarray*}
Since the previous series is a series of non-negative terms we can drop all terms except the first. In this case note that $\cos(\pi t_m) \geq 0$ since $|t_m|\leq 1/2$. So, 
\begin{equation}\label{eqq09}
|R_m| > \frac{b^m}{|h_m|} > \sqrt[\alpha]{\frac23} a^{m/\alpha}\, b^m.
\end{equation}
Finally, using \eqref{eqq09} and \eqref{eqq07} we get
\begin{gather}
\bigg | \frac{f(x+h_m^\alpha) - f(x)}{h_m}\bigg | \geq  |R_m | - |S_m |
 > \sqrt[\alpha]{\frac23} a^{\frac{m}{\alpha}}\, b^m - \pi\, \left (\frac{3}{2} \right )^{\frac{\alpha-1}{\alpha}} \frac{ {a^{\frac{m}{\alpha}}}b^m }{ab-1}
 = \left (\sqrt[\alpha]{\frac23}  - \frac{\pi}{ab-1}\,\left (\frac{3}{2} \right)^{\frac{\alpha-1}{\alpha}} \right) a^{\frac{m}{\alpha}} b^m  \label{eqq10}
\end{gather}
Since $b<1$ we must have $a \geq 3$ so that $ab > \sqrt[\alpha]{a}b$. The stronger hypothesis \eqref{aaa} forces both $\sqrt[\alpha]{a}b>1$ and the term in the parentheses in \eqref{eqq10} to be positive. Since $h_m \to 0$ as $m \to \infty$, the left hand side of \eqref{eqq10} tends to infinity, so that the resulting $p$-derivative cannot exist at $x$. Since $x$ is arbitrary, the conclusion follows.
\end{proof}

\section{An existence and uniqueness theorem}

In the final section we give conditions under which an initial value problem for a generalized Riccati equation with $p$-derivatives has a solution that exists and is unique. 

\begin{theorem}
Let $p$ satisfy (H) and $q : [0,T] \to \mathbb{R}, T<\infty$ be continuous. Assume that for some  $b>0,$ we have 
\begin{equation}\label{hypx}
\|\frac{1}{p_{h}}\|_{L^{1}[0,T]}< \min \left\{ \frac{b}{\|q\|_\infty+b^{2}}, \frac{1}{2b} \right\}.
\end{equation}
Then the initial value problem for the (generalized) Riccati differential equation 
\begin{equation}\label{rde}
D_p u(t)+u^{2}(t)=q(t),\quad u(0)=u_{0},
\end{equation}
 has a unique continuous solution $u(t)$ on $[0, T]$.
\begin{proof} Let $B=\{u \in C[0,T], \|u\|_\infty\leq b\}$. Then $B$ is a complete metric space.
Define an operator $F$ on $B$ by $F(u)=I_{p}(q(t)-u^{2}(t))+u_{0}.$
Then for every $u, v \in B$.
\begin{eqnarray*}
|Fu-Fv| &=& | I_{p}(q(t)-u^{2}(t))-I_{p}(q(t)-v^{2}(t))| \\ 
&=& \bigg |\int_0^t \frac{q(s)-u^{2}(s)}{p_{h}(s,0)}-\frac{q(s)-v^{2}(s)}{p_{h}(s,0)}\, ds\bigg | \\ 
&=& \bigg |\int_0^t \frac{(v(s)-u(s))(v(s)+u(s))}{p_{h}(s,0)}\, ds \bigg | \\ 
&\le& 2b \|u-v\|_\infty \, \int_0^t \bigg |\frac{1}{p_{h}(s,0)} \bigg| \, ds 
\end{eqnarray*}
It follows that $\|Fu-Fv\|_\infty<k\|u-v\|$ is a contraction on $B$, with $k= 2b \|\frac{1}{p_{h}}\|_{L^{1}[0,T]}<1$, by hypothesis. 

Next, we show that $F:B\to B$. Clearly, for $u\in B$, $Fu$ is continuous on $[0,T]$. Next, observe that
\begin{eqnarray*}
\|Fu\|_\infty \leq \int_0^T \frac{\| q(s) - u^2(s)\|_\infty}{|p_h(s,0)|}\, ds \leq (\|q\|_\infty + b^2) \|\frac{1}{p_{h}}\|_{L^{1}[0,T]} \leq b,
\end{eqnarray*}
by hypothesis. Hence $F$ maps $B$ into itself. Applying the contraction principle we get that $F$ has a unique fixed point $u \in C[0,T]$ such that $Fu=u.$ Theorem~\ref{th3.5} gives us the final result. 
\end{proof}
\end{theorem}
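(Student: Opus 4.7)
The natural plan is to convert the initial value problem into an equivalent integral fixed-point equation and then apply the Banach contraction mapping principle. Since Theorem~\ref{th3.5} tells us that $D_p$ and $I_p$ act as inverses (modulo the additive constant determined by the initial condition), solving \eqref{rde} with $u(0)=u_{0}$ is equivalent to finding a continuous $u$ with
$$u(t)=u_{0}+I_{p}(q-u^{2})(t)=u_{0}+\int_{0}^{t}\frac{q(s)-u^{2}(s)}{p_{h}(s,0)}\,ds.$$
This reformulation is legitimate because hypothesis (H) combined with the assumption $1/p_{h}(\cdot,0)\in L^{1}[0,T]$ implicit in \eqref{hypx} makes the integral well defined for every continuous $u$.

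Next I would work on the closed ball $B=\{u\in C[0,T]:\|u\|_{\infty}\leq b\}$, which is a complete metric space under the sup norm, and define $F:B\to C[0,T]$ by $F(u)(t)=u_{0}+I_{p}(q-u^{2})(t)$. The first task is to check that $F(B)\subseteq B$. Since for $u\in B$ we have $|q(s)-u^{2}(s)|\leq\|q\|_{\infty}+b^{2}$ pointwise, the integrand is controlled by $(\|q\|_{\infty}+b^{2})/|p_{h}(s,0)|$, and integrating gives $\|Fu\|_{\infty}\leq(\|q\|_{\infty}+b^{2})\|1/p_{h}\|_{L^{1}[0,T]}\leq b$ by the first bound in \eqref{hypx} (absorbing $u_{0}$ into the estimate, which requires $|u_{0}|$ to be small compared to $b$ — a point one should address by assuming $u_{0}$ sits inside the admissible margin, or by subtracting it off and working with $u-u_{0}$).

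For the contraction step I would factor the quadratic difference: for $u,v\in B$,
$$|Fu(t)-Fv(t)|=\left|\int_{0}^{t}\frac{v^{2}(s)-u^{2}(s)}{p_{h}(s,0)}\,ds\right|\leq\int_{0}^{t}\frac{|v(s)-u(s)|\,|v(s)+u(s)|}{|p_{h}(s,0)|}\,ds,$$
and then use $\|u+v\|_{\infty}\leq 2b$ to pull out $2b\|u-v\|_{\infty}$. Taking the supremum in $t$ yields $\|Fu-Fv\|_{\infty}\leq 2b\|1/p_{h}\|_{L^{1}[0,T]}\,\|u-v\|_{\infty}$, and the second bound in \eqref{hypx} guarantees that the Lipschitz constant is strictly less than $1$. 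Banach's theorem then delivers a unique fixed point $u\in B$, and applying $D_{p}$ to the fixed-point identity via Theorem~\ref{th3.5} recovers \eqref{rde}, with $u(0)=u_{0}$ automatic since $I_{p}(\cdot)(0)=0$.

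The main subtlety is the interplay between the two bounds in \eqref{hypx}: one must be careful that the radius $b$ is chosen to simultaneously control the nonlinearity (first bound, self-mapping) and the Lipschitz constant (second bound, contraction), and this is precisely why the hypothesis is stated as a minimum of two quantities rather than a single estimate. A secondary point worth pinning down is the role of $u_{0}$ in the self-mapping step, which the stated hypothesis treats implicitly; the cleanest way to handle this is either to assume $|u_{0}|$ is small enough that $|u_{0}|+(\|q\|_{\infty}+b^{2})\|1/p_{h}\|_{L^{1}[0,T]}\leq b$, or to shift variables $w=u-u_{0}$ at the outset so that the fixed-point map vanishes at $t=0$.
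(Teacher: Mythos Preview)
Your proposal is correct and follows essentially the same route as the paper's proof: recast the IVP as the fixed-point equation $u=u_{0}+I_{p}(q-u^{2})$ on the closed ball $B=\{u\in C[0,T]:\|u\|_{\infty}\le b\}$, use the first bound in \eqref{hypx} for the self-mapping estimate and the second for the contraction constant $2b\,\|1/p_{h}\|_{L^{1}}<1$, then invoke Banach's theorem and Theorem~\ref{th3.5}. Your observation about the unaccounted $u_{0}$ in the self-mapping step is well taken; the paper's own proof also drops that term silently.
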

\begin{remark}
Observe that there are no sign restrictions on $p_h(t, 0)$. Note that $D_p$ may in fact depend on a parameter $\alpha$, subject only to the $L^1$-condition on $1/p_h$ at the outset. For example, if we choose $p(t,h) = t+ht^{1-\alpha}$ as in \cite{kha}, the hypothesis \eqref{hypx} above becomes,
$$\frac{T^\alpha}{\alpha} \leq \min \left\{ \frac{b}{\|q\|_\infty+b^{2}}, \frac{1}{2b} \right\},$$
so we can see that the assumption that $\alpha \in (0,1)$ is not necessary, just that $\alpha >0$. Of course, $T$ will generally
decrease as $\alpha$ grows. Finally, this solution can always be found using the method of successive approximations as implied by the contraction principle.

Similarly, if $p(t,h)=t + \frac{\Gamma(\beta)}{\Gamma(\beta-\alpha+1)} h t^{1-\alpha}$ with $\beta>-1$, $\beta \in \mathbb{R^{+}}$ and $0<\alpha \le 1$ as in \cite{as}, the generalized Rolle's Theorem, Mean Value Theorem and Riccati differential equation studied here include the corresponding theorems in \cite{as}. In addition, this existence theorem clarifies the purely numerical results obtained in \cite{as} when solving a special Riccati equation of the form \eqref{rde} using the fractional derivative \eqref{eq003}, which, as we have shown, is contained in our theory.
\end{remark}

\section{Open question}
\vskip0.15in
\begin{enumerate}
\item 
Is there a function $p$, satisfying \eqref{eqq04}, and a function $f$ such that $f$ is $p$-differentiable and such that $D_p f(t) \neq 0$ for all t in some interval (or, more generally, some set of positive measure)?
\end{enumerate}

\section{Conclusion}
In this paper we have extended the theory of $p$-derivatives in \cite{JPA} to include results such as the mean-value theorem, Rolle's theorem and integration by parts. In so doing we pointed out that the conformable fractional derivative of a given function, as considered by \cite{kha} and \cite{kat}, is actually an ordinary derivative except possibly at one point. We expanded on the cases where the partial derivative $p_h(t,0)$ either vanishes or doesn't and in so doing showed that in the former case there exists, for each $\alpha >1$, a fractional derivative and a function whose fractional derivative exists nowhere on the real line. In the case where $p_h(t,0)=0$ many of the previous results have no analogues and an inverse of the $p$-derivative generally does not exist. We also presented an existence and uniqueness theorem for a Riccati-type equation involving a $p$-derivative whose solution may always be found using successive approximations. The results presented here extend many of the results found in the literature as referred to in the text.

\section*{References}

\end{document}